\documentclass[ 12pt]{article}
\usepackage[top=1in, bottom=1in, left=1in, right=1in]{geometry}
\usepackage{amssymb}
\usepackage{amsthm}
\usepackage{amsfonts}
\usepackage{amsmath}
\usepackage{hyperref}
\usepackage{bbm}
\usepackage{color}


\newtheorem*{theorem*}{Theorem}

\newtheorem{theorem}{Theorem}[section]

\newtheorem{lemma}[theorem]{Lemma}
\newtheorem{corollary}[theorem]{Corollary}

\newtheorem{remark}[theorem]{Remark}

\newtheorem{proposition}[theorem]{Proposition}

\def\del{\partial}
\def\dbar{\bar\partial}
\def\ddbar{\del\dbar}

\def\del{\partial}

\def\o{\omega}

\def\beq{\begin{equation}}
\def\eeq{\end{equation}}

\title{The isometries of the space of K\"ahler metrics}

\author{Tam\'as Darvas}
\date{\emph{\small{To Anita. \vspace{-0.1in}}}}
\begin{document}
\maketitle
\begin{abstract} Given a compact K\"ahler manifold, we prove that all global isometries of the space of K\"ahler metrics are induced by biholomorphisms and anti-biholomorphisms of the manifold. In particular, there exist no global symmetries for Mabuchi's metric. Moreover, we show that the  Mabuchi completion does not even admit local symmetries. Closely related to these findings, we provide a large class of metric geodesic segments that can not be extended at one end, pointing out the first such examples in the literature.
\end{abstract}

\section{The main results}

Let $(X,\omega)$ be a compact connected K\"ahler manifold. Given a K\"ahler metric $\omega'$ cohomologuos to $\omega$, by the $\partial \bar \partial$-lemma of Hodge theory there exists $u \in C^\infty(X)$ such that $\omega' := \omega + i\ddbar u.$
Such a metric $\omega'$ is said to belong to the \emph{space of K\"ahler metrics} $\mathcal H$. By the above, up to a constant, one can identify $\mathcal H$ with the \emph{space of K\"ahler potentials}:
$$\mathcal H_\o := \{u \in C^\infty(X) \ \textup{ s.t. } \ \omega + i\ddbar u >0 \}.$$ 
This space can be endowed with  a natural infinite dimensional $L^2$ type Riemannian metric \cite{Ma87,Se92,Do99}:
\begin{equation}\label{eq: Riem}
\langle\xi,\zeta\rangle_{v} := \frac{1}{V}\int_X \xi \zeta \omega_v^n, \ \ v \in \mathcal H_\omega, \ \xi,\zeta \in T_v \mathcal H_\o\simeq C^\infty(X),
\end{equation}
where $V = \int_X \omega^n$. Additionally, Donaldson  and Semmes pointed out that $(\mathcal H_\omega, \langle \cdot,\cdot \rangle)$ can be thought of as a formal symmetric space \cite{Se96, Do99}:
\begin{equation}\label{eq: H_simet}
\mathcal H_\omega \simeq \frac{\textup{Ham}_\omega^{\Bbb C}}{\textup{Ham}_\omega},
\end{equation}
where $\textup{Ham}_\omega$ is the group of Hamiltonian symplectomorphisms of $\omega$, and $\textup{Ham}_\omega^{\Bbb C}$ is its formal complexification. Though not quite precise, the underlying heuristic of \eqref{eq: H_simet} led to many advances in the understanding of the geometry of $\mathcal H_\o$, as well as the formulation of stability conditions aiming to characterize existence of canonical metrics (for an exposition see \cite{Sz14}).

\paragraph{Global $L^2$ isometries and  symmetries of $\mathcal H_\o$.} For finite dimensional Riemannian manifolds, the existence of a symmetric structure arising as a quotient of Lie groups, as in \eqref{eq: H_simet}, is equivalent with existence of global symmetries at all points of the manifold \cite{He01}. Such maps  are global involutive isometries reversing geodesics at a specific point. If such symmetries existed for $(\mathcal H_\o, \langle\cdot,\cdot\rangle)$ it would perhaps allow to make a precise sense of \eqref{eq: H_simet}.

Recently a large class of local symmetries of $\mathcal H_\o$ were constructed in \cite{BCKR16}, via complex Legendre transforms, that also found applications to interpolation of norms \cite{BCKR16b}. Moreover, it was shown in \cite{Le17b} that all local symmetries of $\mathcal H_\o$ arise from the construction of \cite{BCKR16}. Below we show that global symmetries actually do not exist, in particular these local symmetries can not be extended to $\mathcal H_\o$. This will follow from our characterization of the isometry group of $(\mathcal H_\o, \langle\cdot,\cdot\rangle)$.

First we recall some terminology. Let $\mathcal U,\mathcal V \subset \mathcal H_\o$ be open sets. We say that a map $F: \mathcal U \to \mathcal V$ is $C^1$, or (with slight abuse of terminology) differentiable, if $(F,F_*): \mathcal U \times C^\infty(X) \to \mathcal V \times C^\infty(X)$ is continuous as a map of  Fr\'echet spaces. Here $F_*$ is the differential of $F$  (see \cite[p. 3]{Le17c} and references therein for more details).  Moreover, $F:\mathcal U \to \mathcal U$ is a \emph{differentiable} $L^2$ \emph{symmetry at} $\phi \in \mathcal U$ if  $F^2=Id$, $F(\phi)=\phi$, $F_* |_\phi=-Id$ and
\begin{equation}\label{eq: d_p-isometry_def}
\int_X |\xi|^2 \omega_v^n = \int_X |F_*\xi|^2 \omega_{G(v)}^n, \ v \in \mathcal H_\o, \ \xi \in T_v \mathcal H_\o.
\end{equation}
If $F:\mathcal U \to \mathcal V$ is $C^1$, satisfies \eqref{eq: d_p-isometry_def} and it is bijective, then it is called a \emph{differentiable} $L^2$ \emph{isometry}. Due to infinite dimensionality, it is not yet known if differentiable $L^2$ isometries are automatically smooth \cite{Le17a}, hence the isometries we consider in this work are possibly more general than the ones in \cite{BCKR16,Le17b}.

A small class of global $L^2$ isometries has been previously known in the literature \cite[p. 16]{Le17a}. One of them is the so called   \emph{Monge--Amp\`ere flip} $\mathcal I:\mathcal H_\omega \to \mathcal H_\omega$, and is defined by the formula $\mathcal I(u) = u - 2I(u)$, where  $I:\mathcal H_\omega \to \Bbb R$ is the Monge--Amp\`ere energy: 
$$I(u) = \frac{1}{V(n+1)}\sum_{j=0}^n \int_X u \omega^j \wedge \omega_u^{n-j}.$$
The map $\mathcal I$ is involutive and its name is inspired by the fact that it flips the sign of $I$. Indeed, $I(\mathcal I(u)) = -I(u)$. 

We say that a biholomorphism $f:X \to X$ preserves  the K\"ahler class $[\omega]$ if $[f^*\omega] = [\omega]$. Similarly, an anti-biholomorphism $g:X \to X$ flips the the K\"ahler class $[\omega]$ if $[g^* \omega] = -[\omega]$. Such maps also induce a class of  global $L^2$ isometries, and we refer to  Section 2.3 for the detailed construction.

In our first main result we point out that these maps and their compositions are the only global differentiable $L^2$ isometries:

\begin{theorem}\label{thm: no_smooth_symmetry_intr} Let $F:\mathcal H_\o \to \mathcal H_\o$ be a differentiable $L^2$ isometry. Then exactly one of the following holds:\\
\noindent (i) $F$ is induced by a biholomorphism or anti-biholomorphism $f:X \to X$ that preserves or flips $[\omega]$, respectively.\\
\noindent (ii)  $F \circ \mathcal I$ is induced by a biholomorphism or anti-biholomorphism $f:X \to X$ that preserves or flips  $[\omega]$, respectively.
\end{theorem}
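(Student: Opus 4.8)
The plan is to exploit two structures on $\Ho$ that any $L^2$ isometry must respect: the Monge--Amp\`ere energy $I$, which singles out a canonical Euclidean direction, and the curvature, whose Donaldson--Mabuchi formula encodes the Poisson bracket and hence the symplectic geometry of $(X,\o)$. First I would show the energy is rigid. Since $dI_\phi(\xi)=\frac1V\int_X \xi\,\o_\phi^n=\langle 1,\xi\rangle_\phi$, the gradient of $I$ is the constant field $1$, which has unit length at every $\phi$ and is parallel for the Mabuchi (Levi--Civita) connection, as its covariant derivative along any curve vanishes. Thus $1$ spans a parallel line field and $\Ho$ splits, at least formally, as a metric product $\RR\times\Ho^0$ with $\Ho^0=\{I=0\}$ and $I$ the $\RR$-coordinate. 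The curvature formula $\langle R(\xi,\zeta)\zeta,\xi\rangle_\phi=-\tfrac14\|\{\xi,\zeta\}_\phi\|_\phi^2$, where $\{\cdot,\cdot\}_\phi$ is the Poisson bracket of $\o_\phi$, shows that no other parallel field exists modulo constants: a parallel $\eta$ orthogonal to $1$ would satisfy $\{\{\xi,\zeta\}_\phi,\eta\}=0$ for all $\xi,\zeta$, forcing $\eta$ constant and hence, being mean-zero, $\eta=0$. Consequently the $\RR$-factor is canonical, so $F_*1=\pm1$ and $I\circ F=\pm I+c$ for some $c\in\RR$. Composing $F$ with the flip $\mathcal I$ (in the minus case) and with the translation $u\mapsto u-c$ reduces everything to an isometry with $I\circ F=I$ and $F_*|_\phi 1=1$; this composition is exactly what produces alternative (ii).

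Next I would recover the symplectic structure using that $F$ preserves curvature. Combining the isometry identity \eqref{eq: d_p-isometry_def} with the curvature formula yields, for all $\xi,\zeta$,
\[
\int_X|\{\xi,\zeta\}_\phi|^2\,\o_\phi^n=\int_X|\{F_*\xi,F_*\zeta\}_{F(\phi)}|^2\,\o_{F(\phi)}^n .
\]
Since \eqref{eq: d_p-isometry_def} already says $F_*|_\phi$ is a linear $L^2$-isometry intertwining the probability measures $\o_\phi^n/V$ and $\o_{F(\phi)}^n/V$, a polarization argument should upgrade this $L^2$-identity to the pointwise relation $\{F_*\xi,F_*\zeta\}_{F(\phi)}=\pm F_*\{\xi,\zeta\}_\phi$; that is, $F_*|_\phi$ is an isomorphism (or anti-isomorphism) of the Poisson Lie algebras of $(X,\o_\phi)$ and $(X,\o_{F(\phi)})$.

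From here I would invoke Poisson rigidity: a Lie-algebra isomorphism of the Poisson algebras of compact symplectic manifolds is induced by a genuine (anti-)symplectomorphism $f_\phi:X\to X$ with $f_\phi^*\o_{F(\phi)}=\pm\o_\phi$, which is the Banach--Stone-type step. A connectedness argument, using that $F$ is $C^1$ and that the rigid maps $f_\phi$ vary continuously, should force $f_\phi=f$ to be independent of $\phi$. Feeding $f^*\o_{F(\phi)}=\pm\o_\phi$ back for the whole family of cohomologous K\"ahler forms $\{\o_\phi:\phi\in\Ho\}$, which collectively determine the complex structure $J$ through the operator $\i\ddbar$, forces $f$ to preserve or reverse $J$, i.e. to be a biholomorphism or anti-biholomorphism; the plus sign then corresponds to preserving $[\o]$ and the minus sign to flipping it. One checks $F$ is precisely the induced isometry, and tracing the normalizations of the first step produces alternative (i) or (ii); the two are mutually exclusive because $I\circ F=\pm I+c$ cannot hold with both signs.

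The main obstacle is regularity. Isometries are assumed only $C^1$, so the Mabuchi connection and curvature are not literally available, and \eqref{eq: d_p-isometry_def} cannot simply be differentiated in $v$. The heart of the matter is therefore to access the second-order (Poisson-bracket) information above from first-order data alone---either through a metric-space version of the de Rham splitting and of Poisson rigidity on the completion, where $(\Ho,d)$ is non-positively curved, or by bootstrapping just enough regularity of $F$ along geodesics to legitimize the curvature identity. Carrying this out, together with the $\phi$-independence of $f$ and the detection of $J$ in the final step, is where I expect the real work to lie.
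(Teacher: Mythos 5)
The regularity obstacle you flag at the end is real, and it is worth knowing how the paper gets around it: not by bootstrapping curvature, but by importing Lempert's classification theorem (Theorem \ref{thm: Lempert_isom}, from \cite{Le17a}), which states that any merely differentiable $L^2$ isometry already satisfies the pointwise formula $F_*(u)\xi = a\,\xi\circ G_u - b\int_X\xi\,\omega_u^n$ with $G_u^*\omega_u=\pm\omega_{F(u)}$, $a=\pm1$, $b\in\{0,2a\}$. Your steps 1--3 (energy splitting, polarization of the curvature identity, Poisson rigidity) are in essence a formal re-derivation of exactly this input; even if you filled the second-order regularity gap, you would only have arrived at the paper's \emph{starting point}. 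The entire content of the paper's proof is the passage from this pointwise formula to the global statement, and it is there that your outline has two genuine gaps.

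First, your claim that ``a connectedness argument, using that $F$ is $C^1$ and that the rigid maps $f_\phi$ vary continuously, should force $f_\phi=f$ to be independent of $\phi$'' is false as stated: the diffeomorphism group is connected and nonconstant continuous families $\phi\mapsto f_\phi$ abound. Worse, there is a counterexample to the underlying intuition: the complex Legendre symmetries of \cite{BCKR16} are genuine smooth local $L^2$ isometries whose maps $G_u$ depend nontrivially on $u$, so constancy of $G_u$ is a strictly \emph{global} phenomenon. The paper proves it through a multi-step geometric argument -- the transformation rule for initial tangents of $d_2$-geodesics (Proposition \ref{prop: geod_tangent_a=1}), the identity \eqref{eq: inf_isometry_id}, the bump-function subgeodesics of Proposition \ref{prop: delta_x_subgeod}, and a contact-set/regularized-maximum contradiction in the proof of Theorem \ref{thm: no_smooth_symmetry_a=1} that crucially uses surjectivity of $F$ onto all of $\mathcal H_\omega$. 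Second, you never exclude the reversing case $F_*\xi=-\xi\circ G_u$ (i.e.\ $a=-1$). Your curvature identity detects only the \emph{product} of the two signs: an anti-biholomorphism ($a=1$, $G_u^*\omega_u=-\omega_{F(u)}$) and a Legendre-type symmetry ($a=-1$) both induce anti-isomorphisms of the Poisson algebras, so your step cannot tell them apart, and composing with $\mathcal I$ changes only $b$, never $a$ (Lemma \ref{lem: b_flip}). Since $a=-1$ isometries do exist locally, no formal or local argument of the kind you propose can rule them out; the paper does so globally (Theorem \ref{thm: no_smooth_symmetry_a=-1}) by extending $F$ to the metric completion $(\mathcal E^2_\omega,d_2)$, using that $F$ is a $d_p$-isometry for every $p\geq1$ to get the sup/inf identity of Lemma \ref{lem: inf_sup}, and deriving a blow-up contradiction with potentials unbounded below ($\sup_X F(u_k)\to+\infty$ while $d_2(F(u_k),F(u))\to0$ forces convergence of suprema). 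Neither of these two global mechanisms appears in your proposal, and without them the theorem is out of reach.
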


The space of potentials $\mathcal H_\o$ admits a Riemannian splitting $\mathcal H_\omega = \mathcal H \oplus \Bbb R$, via the Monge--Amp\`ere energy $I$. As the fixed point set of $\mathcal I$ is exactly $\mathcal H = I^{-1}(0)$, we obtain the following corollary regarding isometries of $\mathcal H$:
\begin{corollary}\label{cor: no_smooth_symmetry_intr} Let $F:\mathcal H \to \mathcal H$ be a differentiable $L^2$ isometry. Then $F$ is induced by a biholomorphism or anti-biholomorphism $f:X \to X$ that preserves or flips $[\omega]$, respectively.
\end{corollary}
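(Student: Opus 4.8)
The plan is to derive the Corollary as a direct consequence of Theorem \ref{thm: no_smooth_symmetry_intr}, using the Riemannian splitting $\mathcal H_\o = \mathcal H \oplus \mathbb R$ induced by the Monge--Amp\`ere energy $I$. The key observation is that $\mathcal H = I^{-1}(0)$ is precisely the fixed point set of the \MA flip $\mathcal I$, and moreover $\mathcal I$ acts as $+\mathrm{Id}$ on the $\mathcal H$ factor and as $-\mathrm{Id}$ on the $\mathbb R$ factor of this orthogonal splitting. The strategy is to take a differentiable $L^2$ isometry $F:\mathcal H \to \mathcal H$ and extend it to a differentiable $L^2$ isometry $\tilde F:\mathcal H_\o \to \mathcal H_\o$ of the full space, apply the dichotomy of Theorem \ref{thm: no_smooth_symmetry_intr} to $\tilde F$, and then check that only alternative (i) survives the constraint that $\tilde F$ fixes the submanifold $\mathcal H$.

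First I would define the extension. Since $\mathcal H_\o = \mathcal H \oplus \mathbb R$ is an orthogonal Riemannian product (with the $\mathbb R$ factor parametrized by $I$ and carrying a flat metric), any isometry $F$ of the factor $\mathcal H$ extends to the product isometry $\tilde F := F \oplus \mathrm{Id}_{\mathbb R}$, that is, $\tilde F(u) = F(u - I(u)) + (\text{the constant restoring } I(u))$, written more carefully using the splitting coordinates. One must verify that $\tilde F$ is indeed a differentiable $L^2$ isometry of $\mathcal H_\o$; this reduces to the differentiability and isometry properties of $F$ together with the smoothness of the projection $u \mapsto (u - I(u), I(u))$, which follows from the standard first variation formula $dI_u(\xi) = \frac{1}{V}\int_X \xi\, \omega_u^n$ for the \MA energy.

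Next I would apply Theorem \ref{thm: no_smooth_symmetry_intr} to $\tilde F$, obtaining that either $\tilde F$ or $\tilde F \circ \mathcal I$ is induced by a biholomorphism or anti-biholomorphism of $X$. The crucial step is to rule out the second alternative. Here the point is that $\tilde F$ preserves the splitting, hence commutes with the reflection $\mathcal I$ of the $\mathbb R$ factor: $\tilde F \circ \mathcal I = \mathcal I \circ \tilde F$, and in particular $\tilde F$ fixes $\mathcal H = \mathrm{Fix}(\mathcal I)$ setwise. An isometry of $\mathcal H_\o$ induced by a biholomorphism or anti-biholomorphism $f$ acts on the $\mathbb R$ factor either trivially (biholomorphism preserving $[\omega]$, commuting with $I$) or by sign reversal (anti-biholomorphism flipping $[\omega]$), but in either case it maps the level set $I^{-1}(0)=\mathcal H$ to itself. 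If instead $\tilde F \circ \mathcal I$ were induced by such a map, then $\tilde F = (\tilde F \circ \mathcal I)\circ \mathcal I$ would be a composition of a class-preserving geometric isometry with $\mathcal I$; evaluating on the $\mathbb R$ factor, such a composition reverses the sign of $I$ on $\mathcal H$, contradicting $\tilde F(\mathcal H) = \mathcal H$ (equivalently, $\tilde F$ preserving $I=0$). Thus only alternative (i) is consistent, so $\tilde F$, and hence its restriction $F = \tilde F|_{\mathcal H}$, is induced by a biholomorphism or anti-biholomorphism preserving or flipping $[\omega]$.

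The main obstacle I anticipate is the careful bookkeeping of the extension step: one must confirm that $F \oplus \mathrm{Id}_{\mathbb R}$ genuinely lands in $\mathcal H_\o$ and satisfies the differentiable $L^2$ isometry axioms with respect to the metric \eqref{eq: Riem}, not merely the abstract product metric. This requires knowing that the splitting $\mathcal H_\o = \mathcal H \oplus \mathbb R$ is orthogonal in the $L^2$ metric and that the $\mathcal H$ factor (the level set $I^{-1}(0)$) is totally geodesic, so that an isometry of $\mathcal H$ extends compatibly; both facts are standard properties of the \MA energy $I$ but should be cited or verified. Once the extension is in place, the dichotomy-elimination argument via the fixed point set of $\mathcal I$ is clean, and the conclusion follows immediately.
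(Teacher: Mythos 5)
Your overall skeleton---extend $F$ to $\tilde F = F \oplus \mathrm{Id}_{\mathbb R}$ via the Riemannian splitting $\mathcal H_\omega = \mathcal H \oplus \mathbb R$, apply Theorem \ref{thm: no_smooth_symmetry_intr} to $\tilde F$, and eliminate alternative (ii)---is exactly the route the paper intends (the paper offers no written proof beyond invoking the splitting and the remark that $\mathcal H = I^{-1}(0)$ is the fixed point set of $\mathcal I$). Your extension step is also sound: orthogonality of the splitting together with $F_*(T\mathcal H)\subset T\mathcal H$ kills the cross terms, so $\tilde F(u) = F(u - I(u)) + I(u)$ is a differentiable $L^2$ isometry of $\mathcal H_\omega$ satisfying $I\circ \tilde F = I$.

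However, the elimination of case (ii) as you wrote it does not work. First, a factual slip: by the paper's Section 2.3, \emph{both} $L_f$ and $N_g$ are defined to commute with $I$ (i.e., they act trivially on the $\mathbb R$ factor); an anti-biholomorphism does \emph{not} act by sign reversal there---the $I$-flipping variant $N_g\circ\mathcal I$ is precisely the case (ii) map, not the ``induced'' one. Second, and more seriously, your contradiction is drawn from ``$\tilde F(\mathcal H) = \mathcal H$.'' But if $\tilde F = L_f\circ\mathcal I$ (or $N_g\circ\mathcal I$), this map \emph{also} preserves $\mathcal H$ setwise: $\mathcal I$ fixes $\mathcal H$ pointwise, so $(L_f\circ\mathcal I)|_{\mathcal H} = L_f|_{\mathcal H}$, which maps $\mathcal H$ to $\mathcal H$; more generally, sign reversal of $I$ preserves the level set $I^{-1}(0)$. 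So setwise preservation of $\mathcal H$ cannot distinguish the two alternatives, and the contradiction you claim evaporates---on $\mathcal H$ itself the statement ``reverses the sign of $I$'' is vacuous since $I\equiv 0$ there. The fix is to compare the maps \emph{off} $\mathcal H$: your $\tilde F$ satisfies the functional identity $I\circ\tilde F = I$ on all of $\mathcal H_\omega$, whereas $I\circ(L_f\circ\mathcal I) = I\circ\mathcal I = -I$ and likewise for $N_g\circ\mathcal I$. Evaluating at any $u$ with $I(u)\neq 0$ (e.g., a nonzero constant) yields $I(u) = -I(u)$, a contradiction, which rules out (ii). With this one-line correction your argument is complete and coincides with the paper's intended proof.
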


The above results answer explicitly questions raised by Lempert regarding the extension property of local isometries \cite[p. 3]{Le17a}, though questions surrouding the isometry group of $(\mathcal H_\omega, \langle\cdot,\cdot\rangle)$ go back to  early work of Semmes \cite{Se92,Se96}.

Lastly, via the classification theorem of Lempert (recalled in Theorem \ref{thm: Lempert_isom}), we will see that neither of the maps in the statement of Theorem \ref{thm: no_smooth_symmetry_intr} are symmetries, immediately giving the following non-existence result for differentiable $L^2$ symmetries: 
\begin{corollary}\label{cor: no_smooth_symmetry_intr1} There exists no differentiable $L^2$ symmetry $F:\mathcal H_\o \to \mathcal H_\o$ at any $\phi \in \mathcal H_\o$.
\end{corollary}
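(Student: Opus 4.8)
The plan is to derive Corollary \ref{cor: no_smooth_symmetry_intr1} by combining the structural classification of Theorem \ref{thm: no_smooth_symmetry_intr} with the local classification of symmetries due to Lempert (Theorem \ref{thm: Lempert_isom}). The key observation is that a symmetry carries far more rigidity than a mere isometry: at the fixed point $\phi$ we require $F_*|_\phi = -\mathrm{Id}$ and $F^2 = \mathrm{Id}$. So my strategy is to suppose, for contradiction, that a differentiable $L^2$ symmetry $F$ at some $\phi \in \mathcal H_\o$ exists, feed it into Theorem \ref{thm: no_smooth_symmetry_intr}, and show that neither case (i) nor case (ii) is compatible with the differential condition $F_*|_\phi = -\mathrm{Id}$.

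**First I would** analyze case (i). Here $F$ is induced by a biholomorphism or anti-biholomorphism $f:X\to X$. I would compute the differential $F_*|_\phi$ explicitly from the construction in Section 2.3: geometrically, a map induced by $f$ acts on a potential $v$ essentially by pullback $v \mapsto v\circ f$ (up to the normalizing potential coming from $[f^*\omega]=[\omega]$), so its differential at any point sends a tangent vector $\xi$ to $\xi\circ f$. For this to equal $-\mathrm{Id}$, one would need $\xi\circ f = -\xi$ for all $\xi\in C^\infty(X)$, which forces $f$ to be such that pullback acts as $-\mathrm{Id}$ on all smooth functions — an impossibility, since pullback by a diffeomorphism fixes constants, whereas $-\mathrm{Id}$ sends the constant $1$ to $-1$. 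This rules out case (i). (The anti-biholomorphic subcase is handled identically, since the induced action on tangent vectors is still, up to sign conventions, a pullback that fixes constants.)

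**Next I would** treat case (ii), where $F\circ\mathcal I$ is induced by such a map, i.e. $F = G\circ\mathcal I$ with $G$ induced by a biholomorphism or anti-biholomorphism. The clean route is to compute $F_*|_\phi$ via the chain rule: $F_*|_\phi = G_*|_{\mathcal I(\phi)}\circ (\mathcal I)_*|_\phi$. I would first record the differential of the Monge--Amp\`ere flip: since $\mathcal I(u) = u - 2I(u)$ and $dI|_v(\xi) = \frac{1}{V}\int_X \xi\,\omega_v^n$, the differential is $(\mathcal I)_*|_v(\xi) = \xi - \frac{2}{V}\int_X \xi\,\omega_v^n$, which is reflection across the tangent hyperplane of mean-zero functions — not $-\mathrm{Id}$. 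Composing with the pullback-type map $G_*$ (which again fixes constants up to sign) will produce a differential that cannot equal $-\mathrm{Id}$: tracking the action on constants and on the mean-zero subspace separately, the two factors act inconsistently with a global sign reversal. This contradiction closes case (ii).

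**The main obstacle** I anticipate is making the differential computations of the induced maps in Section 2.3 fully precise, particularly pinning down the normalizing constant in the pullback formula so that the action of $F_*|_\phi$ on constants versus mean-zero functions is unambiguous; the whole argument hinges on the fact that these geometric isometries fix (or merely sign-flip in a controlled way) the constant direction, whereas a symmetry demands uniform multiplication by $-1$. Once the differential formulas are in hand, the contradiction in both cases is immediate. Alternatively, and perhaps more cleanly, I would invoke Lempert's classification (Theorem \ref{thm: Lempert_isom}) directly to identify the only possible candidates for local symmetries and check that the global isometries of Theorem \ref{thm: no_smooth_symmetry_intr} are disjoint from this list, thereby concluding that no $F$ can simultaneously be a global isometry and a symmetry at any point $\phi$.
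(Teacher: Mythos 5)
Your proposal is correct and is essentially the paper's own derivation: the paper obtains Corollary \ref{cor: no_smooth_symmetry_intr1} by checking, via Lempert's formula \eqref{eq: Lempert_formula}, that none of the maps classified in Theorem \ref{thm: no_smooth_symmetry_intr} can have differential $-\mathrm{Id}$ at any point --- equivalently, a symmetry at $\phi$ would force $a=-1$ and $b=0$ in Theorem \ref{thm: Lempert_isom}, and such isometries are excluded by Theorem \ref{thm: no_smooth_symmetry_a=-1}, which is exactly the shortcut the paper records after that theorem. One precision for your case (ii): the contradiction there cannot be located on constants, since $G_*\circ(\mathcal I)_*$ genuinely acts as $-1$ on constants; it must come from the mean-zero directions, where $F_*|_\phi\,\xi=\xi\circ f$, and pullback by a diffeomorphism cannot equal $-\mathrm{Id}$ on mean-zero functions because it preserves $\sup_X$, whereas one can choose $\xi$ with $\int_X\xi\,\omega_\phi^n=0$ and $\sup_X\xi\neq-\inf_X\xi$.
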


\paragraph{Non-existence of local $L^2$ symmetries on the completions.} It was shown in \cite{Ch00} that \eqref{eq: Riem} induces a path length metric space $(\mathcal H_\o,d_2)$. By $(\mathcal E^2_\o,d_2)$ we denote the $d_2$-metric completion of this space, that can identified with a class of finite energy potentials  \cite{Da17b}.

Using density, any differentiable $L^2$ isometry $F:\mathcal H_\o \to \mathcal H_\o$ extends to a unique metric $d_2$-isometry $F: \mathcal E^2_\omega \to \mathcal E^2_\omega$. The proof of Theorem \ref{thm: no_smooth_symmetry_intr} consists of showing that contradictions arise in this extension process, unless $F$ is very special. With this and the above results in mind, one may hope  that the isometry group of the metric space $(\mathcal E^2_\o,d_2)$ could possibly admit elements beyond the ones that arise from the global differentiable $L^2$ isometries of $\mathcal H_\o$. Though this may be true,  we point out below that even local symmetries fail to exist in the context of the completion, further elaborating on phenomenon related to Corollary \ref{cor: no_smooth_symmetry_intr1}. 

Before stating our result, we recall some facts about the $d_2$-geodesics of $\mathcal E^2$. For more details we refer to Section 2.2 and the recent survey \cite{Da18}.
Let $\mathcal V \subset \mathcal E^2_\o$ be $d_2$-open with $\phi \in \mathcal V \cap \mathcal H_\o$. Given a $d_2$-geodesic $[0,1] \ni t \to \phi_t \in \mathcal V$ with $\phi_0 =\phi$, since $t \to \phi_t(x)$ is $t$-convex for almost every $x \in X$, it is possible to introduce $\dot \phi_0 = \frac{d}{dt}|_{t =0} \phi_t$. Moreover, due to \cite[Theorem 2]{Da15}, it follows that
$\dot \phi_0 \in L^2(\omega_\phi^n).$

Let $G: \mathcal V \to G(\mathcal V) \subset \mathcal E^2_\o$ be an $L^2$ isometry, i.e, a bijective map satisfying  $d_2(v_1,v_2)=d_2(G(v_1),G(v_2)), \ v_1,v_2 \in \mathcal V$. It is clear that in this case $t \to G(\phi_t)$ is also a $d_2$-geodesic. Furthermore, we say that $G$ is a \emph{metric} $L^2$ \emph{symmetry at} $\phi$ if $G^2= Id$, $G(\phi)=\phi$ and $\dot {G(\phi_0)} = - \dot \phi_0,$ i.e., $G$ ``reverses"  $d_2$-geodesics at $\phi$.

 Unfortunately,  metric $L^2$ symmetries actually do not exist, implying that the analog of \cite[Theorem 1.2]{BCKR16} does not hold in the context of the metric completion, answering questions of Berndtsson and Rubinstein \cite{Ru19}:

\begin{theorem}\label{thm: no_metric_symmetry_intr} Let $\mathcal V \subset \mathcal E^2_\o$ be a $d_2$-open set and $\phi \in \mathcal V \cap \mathcal H_\o$. There exists no metric $L^2$ symmetry $F: \mathcal V \to \mathcal V$ at $\phi$. 
\end{theorem}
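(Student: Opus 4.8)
The plan is to proceed by contradiction: suppose $F:\mathcal V \to \mathcal V$ is a metric $L^2$ symmetry at $\phi \in \mathcal V \cap \mathcal H_\o$. The conceptual heart of the matter is that a metric symmetry that reverses geodesics at an interior smooth point $\phi$ forces too much rigidity on the local geodesic structure of the completion $\mathcal E^2_\o$, and this rigidity conflicts with the existence of geodesic segments through $\phi$ that cannot be extended backwards. Since $\phi \in \mathcal H_\o$ is a genuine smooth \K potential, near $\phi$ the completion still ``sees'' the smooth structure, so I can import known facts about smooth $d_2$-geodesics emanating from $\phi$ and their tangent data $\dot\phi_0 \in L^2(\omega_\phi^n)$.

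The key steps, in order, are as follows. \emph{First}, I would fix a small geodesic segment $t \mapsto \phi_t$ with $\phi_0 = \phi$ lying in $\mathcal V$, and record that $G$ maps it to another $d_2$-geodesic $t \mapsto G(\phi_t)$ with the same initial point $G(\phi)=\phi$ and reversed initial velocity $\dot{G(\phi_0)} = -\dot\phi_0$. \emph{Second}, by the uniqueness of $d_2$-geodesics with prescribed endpoints (or prescribed initial point and initial velocity) in $\mathcal E^2_\o$, the symmetry $G$ must send the forward geodesic issuing from $\phi$ in direction $\xi = \dot\phi_0$ to the geodesic issuing from $\phi$ in direction $-\xi$. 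In other words, $G$ locally acts on geodesic rays through $\phi$ exactly like the antipodal map $\xi \mapsto -\xi$ on the tangent data. \emph{Third}, and this is the crux, I would invoke the promised examples of the abstract (``a large class of metric geodesic segments that cannot be extended at one end''): choose the direction $\xi$ so that the geodesic ray from $\phi$ in direction $\xi$ extends for positive time $t \in [0,\varepsilon]$ inside $\mathcal E^2_\o$, while the ray in the opposite direction $-\xi$ (equivalently, the backward extension of the original geodesic) does \emph{not} extend — it hits the boundary of the geodesic-extendability locus at $t=0$. Because $G$ would carry the extendable forward ray onto the non-extendable backward ray while preserving $d_2$-distances, and an isometry must send a genuine geodesic segment of length $\ell$ to a genuine geodesic segment of length $\ell$, this produces a contradiction: the image segment is required to exist but has been shown not to.

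The main obstacle I expect is making rigorous the local bijective correspondence between geodesic rays and their initial velocities near the singular point $\phi$ in the completion, and in particular controlling the non-smooth behavior of geodesics in $\mathcal E^2_\o$ well enough to guarantee the non-extendability is genuine and is detected by $G$. Concretely, I need (a) an existence-and-uniqueness statement for $d_2$-geodesics through the smooth point $\phi$ with prescribed $L^2$ initial velocity, valid in a $d_2$-neighborhood, so that the ``reversal'' condition $\dot{G(\phi_0)}=-\dot\phi_0$ actually pins down the image geodesic; and (b) an explicit construction of a geodesic segment through $\phi$ that is extendable on one side but provably not on the other. The convexity and regularity theory of Chen-type geodesics in $\mathcal E^2_\o$ (via the Monge--Amp\`ere equation and the $L^2$ tangent description of \cite{Da15}) should supply (a), while the non-extendable examples advertised in the abstract supply (b); assembling these so that the antipodal action of $G$ lands the ``good'' side onto the ``bad'' side is where the care is required.
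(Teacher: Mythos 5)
Your overall strategy is the same as the paper's: assume a symmetry $F$ at $\phi$ exists, apply it to the $d_2$-geodesic $t \mapsto \psi_t$ joining $\phi$ to a potential $\psi_1 \in \mathcal E^2_\omega \setminus L^\infty$, and contradict the non-extendability result (Theorem \ref{thm: no_geod_ext_intr}). However, there is a genuine gap at exactly your ``crux'' step. You declare that ``the ray in the opposite direction $-\xi$ does not extend'' is \emph{equivalent} to the backward non-extendability of the original geodesic, and then conclude that the image geodesic $F(\psi_t)$ cannot exist. But Theorem \ref{thm: no_geod_ext_intr} only forbids a backward extension of the \emph{specific} curve $t \mapsto \psi_t$; it does not, by itself, forbid the existence of \emph{some} geodesic issuing from $\phi$ with initial velocity $-\dot\psi_0$ --- and that is precisely what $F(\psi_t)$ is. To obtain a contradiction you must convert such a ray into a backward extension of $\psi_t$, i.e.\ prove that the concatenation of $t \mapsto F(\psi_{-t})$ and $t \mapsto \psi_t$ is again a $d_2$-geodesic. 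In a general CAT(0) space, two geodesic segments meeting at a point with ``opposite velocities'' need \emph{not} concatenate to a geodesic (one needs the Alexandrov angle at the junction to equal $\pi$); establishing this here is the content of the paper's Lemma \ref{lem: concat_geod}, whose proof rests on the uniform convexity estimate \cite[Theorem 3.1]{DL18} together with \cite{BDL16}. Your proposal never supplies this step, and your proposed remedy (a) --- an existence-and-uniqueness theory for geodesics with prescribed $L^2$ initial velocity --- is both unavailable (existence fails badly: the whole point of the theorem is that the tangent datum $-\dot\psi_0$ is realized by no geodesic) and insufficient (uniqueness of a ray in direction $-\dot\psi_0$, were it to exist, would not produce an extension of $\psi_t$).

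The gap can be closed in two ways. One is the paper's route: prove the concatenation Lemma \ref{lem: concat_geod} and then cite Theorem \ref{thm: no_geod_ext_intr}. Alternatively, one can prove the needed non-existence statement directly, bypassing concatenation: (i) the initial velocity of \emph{any} $d_2$-geodesic $t \mapsto \eta_t$ with $\eta_0 = \phi \in \mathcal H_\omega$ is essentially bounded above, since $t$-convexity and \eqref{eq: limit_t_0_1} give $\dot\eta_0 \leq \eta_1 - \phi \leq \sup_X \eta_1 - \inf_X \phi < +\infty$ a.e. (recall $\eta_1$ is $\omega$-psh, hence bounded above); while (ii) $-\dot\psi_0$ is essentially unbounded above when $\psi_1 \notin L^\infty$, because $\dot\psi_0 \geq -C$ a.e.\ would force $\psi_1 \geq \phi - C$, hence $\psi_1 \in L^\infty$. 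Either way, an argument of this kind is indispensable; as written, your contradiction does not follow from the ingredients you list. Two smaller omissions: you must justify that a potential $\psi_1 \in \mathcal E^2_\omega \setminus L^\infty$ exists inside a small $d_2$-ball around $\phi$ contained in $\mathcal V$ (the paper cites \cite[Theorem 3]{Da15}), and that the geodesic $t \mapsto \psi_t$ stays in $\mathcal V$, which follows from geodesic convexity of balls in the CAT(0) space $(\mathcal E^2_\omega, d_2)$.
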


Given that  $(\mathcal E^2_\o,d_2)$ is CAT(0), the group of isometries of this metric space has special structure \cite{BH99}, as pointed by B. McReynolds during the Ph.D. thesis defense of the author. In light of the above result, we expect that the group of metric isometries can be characterized as in Theorem \ref{thm: no_smooth_symmetry_intr}, though this remains an open question.

\paragraph{The extension property of geodesic segments.}
As an intermediate step in the proof of Theorem \ref{thm: no_metric_symmetry_intr} we show that a large class of $d_2$-geodesic segments inside $\mathcal E^2_\o$ can not be extended at one of the endpoints. Previously no such examples were known.

\begin{theorem}\label{thm: no_geod_ext_intr} Let $\phi_0 \in \mathcal H_\o$ and $\phi_1 \in \mathcal E^2_\o \setminus L^\infty$. Then the $d_2$-geodesic $t \to \psi_t$ connecting these potentials can not be extended to a $d_2$-geodesic $(-\varepsilon,1] \ni t \to \phi_t \in \mathcal E^2_\o$ for any $\varepsilon >0$.
\end{theorem}

For finite dimensional manifolds, topological and geodesical completeness are equivalent due to the classical Hopf--Rinow theorem. According to the above result, this is not the case for the completion $(\mathcal E^2_\o, d_2)$, despite the fact that this space it is non-positively curved \cite{CC02, Da17b}.

It will be interesting to see if a similar property holds for the $C^{1,1}$-geodesics of Chen and Chu--Tosatti--Weinkove, joining the potentials of $\mathcal H_\o$ \cite{Ch00,CTW17}. 

\paragraph{Relation to the $L^p$ geometry of $\mathcal H_\omega$.} In \cite{Da15} the author introduced a family of $L^p$ Finsler metrics on $\mathcal H_\o$ for any $p \geq 1$, generalizing \eqref{eq: Riem}:
$$\|\xi\|_{p,v} = \bigg(\frac{1}{V}\int_X |\xi|^p \omega_v^n\bigg)^{\frac{1}{p}}, \ \ v \in \mathcal H_\o, \ \xi \in T_v \mathcal H_\o.$$
These induce path length metric spaces $(\mathcal H_\o,d_p)$, and in \cite{Da15} the author computed the corresponding metric completions, that later  found applications to existence of canonical metrics (for a survey see \cite{Da18}). Though this more general context lacks the symmetric space interpretation, all of our above results can be considered in the $L^p$ setting as well.

As the reader will be able to deduce from our arguments below, the $L^p$ version of Theorem \ref{thm: no_metric_symmetry_intr} holds for any $p > 1$. Our proof does not work when $p=1$, since the class of finite energy geodesics may not be stable under isometries  in this case (see \cite[Theorem 1.2]{DL18}). On the other hand, the $L^p$ version of Theorem \ref{thm: no_geod_ext_intr} does hold for all $p \geq 1$. Lastly, our argument for Theorem \ref{thm: no_smooth_symmetry_intr} would most likely go through in the $L^p$ context in case one could obtain the analog of Theorem \ref{thm: Lempert_isom} for differentiable $L^p$ isometries.

\paragraph{Acknowledgements.} We thank L. Lempert for extensive feedback on our manuscript, and for generously explaining to us details about his paper \cite{Le17a}. We also thank B. Berndtsson and Y.A. Rubinstein for suggestions on how to improve the presentation. This work was partially supported by NSF grant DMS 1610202.

\section{Preliminaries}

For simplicity we assume throughout the paper the the K\"ahler metric $\omega$ satisfies the following volume normalization:
$$V = \int_X \omega^n = 1.$$
Using a dilation of $\omega$ this can always be achieved and does not represent loss of generality.

\subsection{The classification theorem of Lempert} 

In this short section we recall the particulars of a result due to Lempert on the classification of local $C^1$ isometries on $\mathcal H_\o$ (\cite[Theorem 1.1]{Le17a}), tailored to our global setting:
\begin{theorem}\label{thm: Lempert_isom} Suppose that $F:\mathcal H_\omega \to \mathcal H_\omega$ is a differentiable $L^2$ isometry. Then for $u \in \mathcal H_\o$ there exists a unique $C^\infty$ diffeomorphism $G_u: X \to X$ such that $G_u^* \omega_u = \pm  \omega_{F(u)}$ and
\begin{equation}\label{eq: Lempert_formula}
F_*(u)\xi = a \xi \circ G_u - b \int_X \xi \omega_u^n, \ \xi \in T_u\mathcal H_\o \simeq C^\infty(X),
\end{equation}
where $a =1$, or $a = -1$, or $b=0$, or $b=2a$.
\end{theorem}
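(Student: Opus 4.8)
The plan is to reduce the theorem to a rigidity statement for the single linear map $L:=F_*(u):C^\infty(X)\to C^\infty(X)$, and then to read off $G_u$, $a$ and $b$ from its structure. By definition (see \eqref{eq: d_p-isometry_def}, where one should read $F$ for $G$), specialized at $u$, the differential $L$ is a linear isometry from $(C^\infty(X),L^2(\omega_u^n))$ into $(C^\infty(X),L^2(\omega_{F(u)}^n))$, and it is bijective because $F^{-1}$ is again a differentiable isometry with differential $L^{-1}$. Such an $L^2$ isometry is by itself far from a composition operator — any unitary of the completed Hilbert spaces qualifies — so the whole point is to exploit the \emph{global} isometric structure, not merely the norm at $u$.

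The extra input I would use is that an isometry intertwines the Levi--Civita connection, and hence the Riemann curvature tensor, of $(\mathcal H_\o,\langle\cdot,\cdot\rangle)$. By the curvature computation of Mabuchi, Semmes and Donaldson \cite{Ma87,Se92,Do99}, the curvature at $u$ is a fixed universal expression in the nested Poisson brackets $\{\cdot,\cdot\}_u$ of the symplectic form $\omega_u$ (schematically $R_u(\xi,\zeta)\eta=-\tfrac14\{\{\xi,\zeta\}_u,\eta\}_u$). Writing $v:=F(u)$, the intertwining relation $L\big(R_u(\xi,\zeta)\eta\big)=R_v(L\xi,L\zeta)L\eta$ becomes the functional identity
\[
L\{\{\xi,\zeta\}_u,\eta\}_u=\{\{L\xi,L\zeta\}_v,L\eta\}_v,\qquad \xi,\zeta,\eta\in C^\infty(X).
\]
Thus $L$ carries the Poisson algebra of $(X,\omega_u)$ to that of $(X,\omega_v)$, up to the one-dimensional radical of the bracket, namely the constants. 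I expect this to be the crux of the argument: promoting a bare $L^2$ isometry to a morphism of the Poisson (equivalently, pointwise-product) structure, while keeping track of the ambiguity along the constant function.

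Granting this, a Banach--Stone-type argument identifies the map induced by $L$ on functions modulo constants as a scalar multiple of a composition operator: on mean-zero functions $L\xi=a\,\xi\circ G_u$ for a diffeomorphism $G_u$ of $X$, and compatibility with the brackets forces $G_u$ to be symplectic or anti-symplectic, i.e. $G_u^*\omega_u=\pm\omega_v$, the sign recording the two cases (which later correspond to biholomorphisms versus anti-biholomorphisms). Lifting back to all of $C^\infty(X)$, the map differs from $\xi\mapsto a\,\xi\circ G_u$ only by a rank-one term valued in the constants, and the $L^2$ structure shows this term is a multiple of the mean, giving $L\xi=a\,\xi\circ G_u-b\int_X\xi\,\omega_u^n$. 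That $G_u$ is a genuine $C^\infty$ diffeomorphism, and is unique, follows from the hypothesis that $(F,F_*)$ is continuous between Fr\'echet spaces, which propagates smoothness through the composition operator and pins $G_u$ down on a separating family of smooth functions.

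It remains to determine the constants, which I expect to be routine. Abbreviating $\bar\xi=\int_X\xi\,\omega_u^n$ and using $G_u^*\omega_u^n=\omega_v^n$ (so that $\int_X\xi\circ G_u\,\omega_v^n=\bar\xi$ and $\int_X\omega_v^n=1$ under the normalization $V=1$), one computes
\[
\int_X\big|a\,\xi\circ G_u-b\bar\xi\big|^2\omega_v^n=a^2\int_X|\xi|^2\omega_u^n+(b^2-2ab)\,\bar\xi^2 .
\]
Equating this with $\int_X|\xi|^2\omega_u^n$ for mean-zero $\xi$ yields $a^2=1$, i.e. $a=\pm1$; testing on some $\xi$ with $\bar\xi\neq0$ then gives $b^2-2ab=0$, i.e. $b=0$ or $b=2a$. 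This is precisely the claimed dichotomy, completing the proof.
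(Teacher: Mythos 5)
You should first be aware that the paper contains no proof of this statement: Theorem \ref{thm: Lempert_isom} is explicitly \emph{recalled} from Lempert \cite[Theorem 1.1]{Le17a}, tailored to the global setting, so your attempt has to be judged against Lempert's argument rather than anything in this text. Your outline does follow the circle of ideas behind that work: the Mabuchi--Semmes--Donaldson curvature formula $R_u(\xi,\zeta)\eta=-\tfrac14\{\{\xi,\zeta\}_u,\eta\}_u$ is correct, and your closing computation, which pins down $a^2=1$ and $b^2-2ab=0$ once the weighted-composition form of $F_*(u)$ is granted, is fine.

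The genuine gap is your ``extra input'': that a differentiable $L^2$ isometry intertwines the Levi--Civita connection and hence the curvature tensor. In this setting that is not an available fact --- it is essentially the hard content of Lempert's work. Here $F$ is only assumed differentiable in the Fr\'echet sense ($(F,F_*)$ continuous); to even \emph{write} the intertwining relation for $\nabla$ or $R$ one must differentiate $F_*$ again, which the hypotheses do not permit, and the classical Myers--Steenrod bootstrap (metric isometries are automatically smooth) is unknown in infinite dimensions --- the paper itself records this as an open problem. Nor can one run the finite-dimensional detour (isometries preserve geodesics, hence Jacobi fields, hence curvature): the Mabuchi metric is a weak Riemannian metric on a Fr\'echet manifold, the initial-value problem for geodesics is ill-posed, two-point geodesics are in general only $C^{1,1}$ \cite{Ch00,CTW17}, and there is no exponential map. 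Establishing that merely differentiable isometries preserve the connection and curvature in this weak setting is precisely what Lempert's companion paper \cite{Le17c} supplies, and what \cite{Le17a} builds on; your proposal assumes it away. Two secondary gaps: (i) from the nested-bracket identity one cannot formally ``peel off'' a bracket to get that $L$ preserves single Poisson brackets modulo constants --- the kernel-of-the-adjoint argument you gesture at presupposes that $L\{\xi,\zeta\}_u$ is itself a Hamiltonian of the transported derivation, which is part of what must be proved; (ii) bijectivity of $L=F_*(u)$ is justified by invoking differentiability of $F^{-1}$, which is not among the hypotheses (the definition of a differentiable $L^2$ isometry only requires $F$ to be bijective and $C^1$).
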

In the particular case of the (local) $L^2$ symmetries constructed in \cite{BCKR16}, formula \eqref{eq: Lempert_formula} is a consequence of \cite[Theorem 5.1, Theorem 6.1, Proposition 7.1]{BCKR16} with $a=-1$ and $b=0$. 

\begin{remark} It follows from the proof of \cite[Theorem 1.1]{Le17a} that the integers $a$ and $b$ in the statement depend continuously on $u \in \mathcal H_\omega$ (as does $G_u$), hence in our case they are independent of $u$, as $\mathcal H_\omega$ is connected. This was pointed out to us by L. Lempert \cite{Le19}.
\end{remark}

 From the classification theorem we obtain the following simple monotonicity result:
\begin{proposition}\label{prop: monotonicity} Suppose that $F:\mathcal H_\omega \to \mathcal H_\omega$ is a differentiable $L^2$ isometry with $b=0$. Let $c \in \Bbb R$ and $u,v \in \mathcal H_\o$ with $u \leq v$. Then the following hold:\\
\noindent (i) if $a = 1$ then $F(u) \leq F(v)$ and $F(u + c)= F(u) + c$.\\
\noindent (ii) if $a = -1$ then $F(u) \geq F(v)$ and $F(u + c)= F(u) - c$.
\end{proposition}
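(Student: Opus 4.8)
The plan is to push the two natural affine paths through the isometry $F$ and integrate the resulting derivative formula pointwise. By Theorem \ref{thm: Lempert_isom}, since $b=0$ the differential reduces to $F_*(u)\xi = a\,\xi \circ G_u$, where $a \in \{1,-1\}$ is constant in $u$ (by the remark following Theorem \ref{thm: Lempert_isom}) and $G_u:X\to X$ is the diffeomorphism attached to $u$. The structural facts I would exploit are that composition with the diffeomorphism $G_u$ preserves pointwise nonnegativity of functions, and that $\mathcal H_\omega$ is stable under affine combinations. Throughout, $\eta := F\circ\gamma$ is a $C^1$ curve in $C^\infty(X)$ by the chain rule, which is legitimate because $(F,F_*)$ is continuous between the relevant Fr\'echet spaces, and hence $\dot\eta(t) = F_*(\gamma(t))\dot\gamma(t)$.

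First, for the constant-shift identities, I would take the path $\gamma(t) = u + tc$, $t\in[0,1]$. Since $i\ddbar(tc)=0$ we have $\omega_{\gamma(t)} = \omega_u > 0$, so $\gamma$ lies in $\mathcal H_\omega$, and as $\dot\gamma(t)\equiv c$ is a constant function we obtain $\dot\eta(t) = a\,c\circ G_{\gamma(t)} = ac$. Integrating in $t$ from $0$ to $1$ gives $F(u+c) = F(u) + ac$, which is exactly the claimed identity in (i) when $a=1$ and in (ii) when $a=-1$.

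Next, for the monotonicity statements, assume $u\le v$ and take the affine path $\gamma(t) = u + t(v-u) = (1-t)u + tv$. A direct computation gives $\omega_{\gamma(t)} = (1-t)\omega_u + t\omega_v$, a convex combination of positive forms, so $\gamma(t)\in\mathcal H_\omega$ for all $t\in[0,1]$. The chain rule then yields $\dot\eta(t) = a\,(v-u)\circ G_{\gamma(t)}$. Fixing $x\in X$ and applying the fundamental theorem of calculus to the real-valued $C^1$ function $t\mapsto\eta(t)(x)$,
$$
F(v)(x) - F(u)(x) = \eta(1)(x) - \eta(0)(x) = a\int_0^1 (v-u)\big(G_{\gamma(t)}(x)\big)\,dt.
$$
Because $v-u\ge 0$ on $X$ and each $G_{\gamma(t)}$ is a bijection of $X$, the integrand is nonnegative, so the sign of the right-hand side matches that of $a$. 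This gives $F(u)\le F(v)$ when $a=1$ and $F(u)\ge F(v)$ when $a=-1$, completing both cases.

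The argument is essentially routine once Theorem \ref{thm: Lempert_isom} is in hand; the only point demanding care is the passage from the Fr\'echet-level chain rule for $\eta = F\circ\gamma$ to the pointwise fundamental theorem of calculus. This is justified precisely because $F$ is differentiable in the sense that $(F,F_*)$ is continuous, so $t\mapsto\eta(t)\in C^\infty(X)$ is a $C^1$ curve and evaluation at a fixed $x$ commutes with differentiation in $t$. I expect this technical bookkeeping, rather than any genuine conceptual obstacle, to be the main thing to get right.
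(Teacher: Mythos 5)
Your proof is correct and follows essentially the same route as the paper: apply Lempert's formula (Theorem \ref{thm: Lempert_isom}) with $b=0$ along the affine segments $t \mapsto u + t(v-u)$ and $t \mapsto u + tc$ in $\mathcal H_\omega$, then integrate the pointwise sign of the derivative. The only differences are cosmetic --- you treat both signs of $a$ uniformly and spell out the Fr\'echet-to-pointwise chain-rule bookkeeping, which the paper leaves implicit.
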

\begin{proof} We only address (ii), as the proof of (i) is analogous.  Let $[0,1] \ni t \to \gamma_t:= v + t (u-v)  \in \mathcal H_\o$. Then $t \to F(\gamma_t)$ is a $C^1$ curve connecting $F(v)$ and $F(u)$. Moreover, Theorem \ref{thm: Lempert_isom} implies that
$$F(u)-F(v) = \int_0^1 \frac{d}{dt} {F(\gamma_t)}dt = \int_0^1 -(u-v)\circ G_{\gamma_t} dt \geq 0.$$
The fact that $F(u + c)= F(u) - c$, follows after another application of Theorem \ref{thm: Lempert_isom} to the curve $[0,1] \ni  t \to \eta_t:= u + tc \in \mathcal H_\o$.
\end{proof}

\begin{corollary}\label{cor: G_constant} Suppose that $F:\mathcal H_\omega \to \mathcal H_\omega$ is a differentiable $L^2$ isometry with $b=0$. Then, in the language of Theorem \ref{thm: Lempert_isom} applied to $F$, we have that $G_{u+c}=G_u$ for all $u \in \mathcal H_\o$ and $c \in \Bbb R$. 
\end{corollary}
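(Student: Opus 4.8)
The plan is to exploit the way $F$ interacts with the additive $\mathbb R$-action $u \mapsto u+c$, using the information already extracted in Proposition \ref{prop: monotonicity}. Writing $T_c : \mathcal H_\o \to \mathcal H_\o$ for the translation $T_c(u) = u+c$, both cases of Proposition \ref{prop: monotonicity} can be summarized uniformly as $F(u+c) = F(u) + ac$, i.e. $F \circ T_c = T_{ac} \circ F$. Since $a \in \{1,-1\}$ is fixed (by the Remark following Theorem \ref{thm: Lempert_isom}) and $b=0$ throughout, this equivariance is the only input I will need.

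First I would differentiate the identity $F(u+c) = F(u) + ac$ in the base variable $u$. Fix $u \in \mathcal H_\o$, $\xi \in T_u \mathcal H_\o \simeq C^\infty(X)$, and a $C^1$ curve $s \mapsto u(s)$ with $u(0)=u$ and $\dot u(0) = \xi$. The translated curve $s \mapsto u(s)+c$ has the same velocity $\xi$ at $s=0$, so the chain rule gives $\frac{d}{ds}\big|_{0} F(u(s)+c) = F_*(u+c)\xi$; differentiating the right-hand side $F(u(s))+ac$ kills the constant and yields $F_*(u)\xi$. Hence
\begin{equation*}
F_*(u+c)\,\xi = F_*(u)\,\xi, \qquad \xi \in C^\infty(X).
\end{equation*}

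Next I would insert the Lempert formula \eqref{eq: Lempert_formula} with $b=0$ on both sides, obtaining $a\,(\xi \circ G_{u+c}) = a\,(\xi \circ G_u)$ for every $\xi \in C^\infty(X)$. Dividing by $a \neq 0$ gives $\xi \circ G_{u+c} = \xi \circ G_u$ for all smooth $\xi$. Since smooth functions separate points of the compact manifold $X$, evaluating at an arbitrary $x \in X$ forces $G_{u+c}(x) = G_u(x)$, whence $G_{u+c} = G_u$.

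I do not expect a genuine obstacle here: the argument is essentially a differentiation of the equivariance relation already supplied by Proposition \ref{prop: monotonicity}. The only point requiring minor care is the differentiation step, namely checking that the differential of the translation $T_c$ is the identity on $T_u \mathcal H_\o \simeq C^\infty(X)$ so that the velocity of $s \mapsto u(s)+c$ is again $\xi$; this is immediate because $T_c$ is an affine shift by a constant, which also justifies that the curve $s \mapsto F(u(s)+c)$ is $C^1$ and that $F_*$ may be applied exactly as in the proof of Proposition \ref{prop: monotonicity}.
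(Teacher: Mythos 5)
Your proposal is correct and follows essentially the same route as the paper: the paper likewise differentiates the translation identity $F(u+t\xi+c)=F(u+t\xi)+ac$ from Proposition \ref{prop: monotonicity} along the curve $t\mapsto u+t\xi$, equates $F_*(u+c)\xi$ with $F_*(u)\xi$, and then invokes formula \eqref{eq: Lempert_formula} and the arbitrariness of $\xi$ to conclude $G_{u+c}=G_u$. The only cosmetic differences are that you treat $a=\pm 1$ uniformly via $F\circ T_c = T_{ac}\circ F$ (the paper does $a=1$ and notes $a=-1$ is identical) and you allow a general curve with velocity $\xi$ rather than the linear one.
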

\begin{proof} We only address the case $a=1$, as the argument for $a=-1$ is identical. Let $\xi \in C^\infty(X)$. By Proposition \ref{prop: monotonicity}(i) and Theorem \ref{thm: Lempert_isom} we have that
$$\xi \circ G_{u+c}= F_*(u+c)\xi = \frac{d}{dt}\bigg|_{t=0} F(u+ t \xi + c) = \frac{d}{dt}\bigg|_{t=0} F(u+ t \xi) = F_*(u)\xi = \xi \circ G_u.$$
Since $\xi \in C^\infty(X)$ is arbitrary, we obtain that $G_{u+c}=G_u$.
\end{proof}

\subsection{The complete metric space $(\mathcal E^2_\o,d_2)$} 
In this short subsection we recall aspects from the work of the author related to the metric completion of $(\mathcal H_\o,d_2)$. For details we refer to the survey \cite{Da18}.

As conjectured by V. Guedj \cite{Gu14}, $\overline{(\mathcal H_\o,d_2)}$ can be identified with $(\mathcal E^2_\o,d_2)$, where $\mathcal E^2_\o \subset \textup{PSH}(X,\omega)$ is an appropriate subset of $\omega$-plurisubharmonic potentials \cite[Theorem 1]{Da17b}. Moreover, $(\mathcal E^2_\o,d_2)$ is a non-positively curved complete metric space, whose points can be joined by unique $d_2$-geodesics. 

Given $u_0,u_1 \in \mathcal E^2_\o$, the unique $d_2$-geodesic $[0,1] \ni t \to u_t \in \mathcal E^2_\o$ connecting these points has special properties. To start, we recall that this curve arises as the following envelope:
\begin{equation}\label{eq: geod_env_def}
u_t:= \sup\{v_t \ | \ \textup{where } t \to v_t \textup{ is a subgeodesic}\}, \ \ t \in (0,1).
\end{equation}
Here a subgeodesic $(0,1) \ni t \to v_t \in \textup{PSH}(X,\omega)$ is a curve satisfying $\limsup_{t \to 0,1} v_t \leq u_{0,1}$ and $u(s,x) := u_{\textup{Re }s}(x) \in \textup{PSH}(S \times X, \omega)$, where $S =\{0 < \textup{Re }s < 1\} \subset \Bbb C$.

It follows from \eqref{eq: geod_env_def} that $t \to u_t(x), t \in (0,1)$ is convex for all $x \in X$ away from a set of measure zero. On the complement we have that $u_t(x)=-\infty, \ t \in (0,1)$. Moreover, due to \cite[Corollary 7]{Da17b}, we also have that 
\begin{equation}\label{eq: limit_t_0_1}
\lim_{t \to 0} u_t(x) = u_{0}(x) \ \ \textup{ and } \ \ \lim_{t \to 1} u_t(x) = u_{1}(x)
\end{equation}
 for all $x \in X$ away from a set of measure zero. In the particular case when $u_0,u_1 \in \mathcal H_\o$,  the curve $t \to u_t$ is $C^{1,1}$ on $[0,1] \times X$ \cite{Ch00, Bl12, CTW17}. 

By $\mathcal C_\o$ we denote the set of continuous potentials in $\textup{PSH}(X,\omega)$. As pointed out previously, a differentiable $L^2$ isometry $F:\mathcal H_\o \to \mathcal H_\o$ induces a unique $d_2$-isometry $F:\mathcal E^2_\o \to \mathcal E^2_\o$, extending the original map (using density). Going forward, we do not distinguish $F$ from its unique extension. Moreover, if $F$ is an isometry with $b=0$ (see Theorem \ref{thm: Lempert_isom}), we point out that $\mathcal C_\omega$ is stable under the extension:
\begin{proposition}\label{prop: C_stable} Suppose that $F:\mathcal H_\omega \to \mathcal H_\omega$ is a differentiable $L^2$ isometry with $b=0$. Then $F(\mathcal C_\omega) \subset \mathcal C_\omega$. More importantly,  $\sup_X \|u_j -u \| \to 0$ implies  $\sup_X \|F(u_j) -F(u) \| \to 0$ for any $u_j,u \in \mathcal C_\o$.
\end{proposition}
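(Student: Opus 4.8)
The plan is to promote Proposition \ref{prop: monotonicity} into a $\sup$-norm non-expansiveness estimate for $F$ on $\mathcal H_\o$, and then transport that estimate to $\mathcal C_\o$ by uniform approximation, reconciling the resulting uniform limit with the abstract $d_2$-extension of $F$. Since $b=0$, the only two cases are $a=1$ and $a=-1$, and Proposition \ref{prop: monotonicity} gives both the monotonicity of $F$ and its behaviour under constant shifts, which is exactly what is needed.

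The first and central step is the following Lipschitz bound. Given $u,v\in\mathcal H_\o$, set $c:=\sup_X|u-v|$, so that $v-c\le u\le v+c$. In the case $a=1$, Proposition \ref{prop: monotonicity}(i) yields $F(v)-c=F(v-c)\le F(u)\le F(v+c)=F(v)+c$, hence $\sup_X|F(u)-F(v)|\le c=\sup_X|u-v|$. In the case $a=-1$, Proposition \ref{prop: monotonicity}(ii) reverses the order but shifts constants with the opposite sign, so from $v-c\le u\le v+c$ one obtains $F(v)-c\le F(u)\le F(v)+c$ once more, giving the identical conclusion. Thus $F|_{\mathcal H_\o}$ is $1$-Lipschitz for $\sup_X|\cdot|$.

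Next I would pass to $\mathcal C_\o$. Every $u\in\mathcal C_\o$ is a uniform limit of potentials $u_j\in\mathcal H_\o$ by standard regularization of continuous $\omega$-plurisubharmonic functions. By the bound above, $\sup_X|F(u_j)-F(u_k)|\le\sup_X|u_j-u_k|\to 0$, so $(F(u_j))_j$ converges uniformly to some $w\in\mathcal C_\o$. It then remains to identify $w$ with the $d_2$-extension $F(u)$: since the $u_j$ are uniformly bounded and $u_j\to u$ uniformly, one has $d_2(u_j,u)\to 0$ by the comparison between uniform and $d_2$-convergence for bounded potentials \cite{Da17b,Da18}, whence $d_2(F(u_j),F(u))\to 0$ because $F$ is a $d_2$-isometry; on the other hand $F(u_j)\to w$ uniformly, hence in $d_2$, and uniqueness of $d_2$-limits forces $F(u)=w\in\mathcal C_\o$. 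This establishes $F(\mathcal C_\o)\subset\mathcal C_\o$, and for the second assertion one approximates arbitrary $u_j,u\in\mathcal C_\o$ by smooth potentials and passes to the limit in the Lipschitz bound using this identification, so that $\sup_X|F(u_j)-F(u)|\le\sup_X|u_j-u|\to 0$.

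I expect the only delicate point to be this last identification of the uniform limit $w$ with the metric extension $F(u)$, which rests on the compatibility of uniform convergence of uniformly bounded potentials with $d_2$-convergence; granting that, the rest is an immediate consequence of the monotonicity and constant-shift properties furnished by $b=0$. The $\sup$-norm Lipschitz estimate itself, which drives the whole argument, is entirely elementary once Proposition \ref{prop: monotonicity} is in hand.
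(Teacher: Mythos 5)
Your proof is correct, and although it rests on the same two pillars as the paper's own argument --- Proposition \ref{prop: monotonicity} together with the decreasing smooth approximation of \cite{BK07} made uniform by Dini's lemma --- the mechanism you use to bridge from $\mathcal H_\o$ to the completion is genuinely different. The paper's proof is order-theoretic: it first upgrades Proposition \ref{prop: monotonicity} so that it holds for the extended map $F:\mathcal E^2_\o \to \mathcal E^2_\o$, using that $d_2$-convergence implies pointwise a.e.\ convergence (\cite[Theorem 5]{Da15}), and then sandwiches $F(u) \le F(u_k) \le F(u)+\varepsilon$ directly against the metric-extension value $F(u)$, reading off uniform convergence. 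You instead distill from monotonicity and the constant-shift rule the quantitative fact that $F$ is $1$-Lipschitz for $\sup_X|\cdot|$ on $\mathcal H_\o$ (which treats $a=1$ and $a=-1$ in one stroke, where the paper says ``analogous''), produce the uniform limit $w=\lim_j F(u_j)$ by completeness, and identify $w$ with the metric extension $F(u)$ via the opposite-direction comparison --- uniform convergence of uniformly bounded potentials implies $d_2$-convergence, which follows from \cite[Theorem 3]{Da15}, or for potentials in $\mathcal H_\o$ simply by integrating \eqref{eq: Riem} along the affine segment $t \mapsto (1-t)u_j + tu$ --- together with uniqueness of limits in the metric space $(\mathcal E^2_\o,d_2)$; this delicate step, which you correctly flag, is indeed sound. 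Each route buys something: yours yields the contraction estimate $\sup_X|F(u)-F(v)| \le \sup_X|u-v|$ on all of $\mathcal C_\o$, a clean statement of independent interest from which both assertions of the proposition drop out simultaneously (and which foreshadows the equality later obtained in \eqref{eq: inf_isometry_id}); the paper's yields, as a by-product, the validity of Proposition \ref{prop: monotonicity} for the extension on all of $\mathcal E^2_\o$, not merely on $\mathcal C_\o$, and this stronger fact is quoted again in the proofs of Proposition \ref{prop: geod_tangent_a=1} and Theorem \ref{thm: no_smooth_symmetry_a=1}, where comparisons involve potentials that need not be continuous. So if your proof were substituted into the paper, those later invocations would require a supplementary argument (essentially the paper's first paragraph); as a self-contained proof of the stated proposition, however, your argument is complete.
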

\begin{proof}We only argue the case when $a=1$, as the proof is analogous in case $a=-1$. Since $d_2$-convergence implies pointwise a.e. convergence (see \cite[Theorem 5]{Da15}),  Proposition \ref{prop: monotonicity}(i) holds for the extension $F:\mathcal E^2_\o \to \mathcal E^2_\o$ and $u,v \in \mathcal E^2_\omega$ satisfying $u \leq v$.

 Let $u \in \mathcal C_\omega$. Then \cite{BK07} implies existence of $u_k \in \mathcal H_\omega$ such that $u_k \searrow u$. In fact, due to Dini's lemma, the convergence is uniform. From Proposition \ref{prop: monotonicity} it follows that $\{F(u_k)\}_k \subset \mathcal H_\o$ is monotone decreasing. Due to uniform convergence, we have that for any $\varepsilon >0$ there exists $k_0$ such that $u \leq u_k \leq u+ \varepsilon$ for $k \geq k_0$. Then Proposition \ref{prop: monotonicity} implies that $F(u) \leq F(u_k) \leq F(u) + \varepsilon, \ k \geq k_0$. This gives that $F(u_k)$ converges to $F(u)$ uniformly, in particular $F(u) \in \mathcal C_\omega$. 
 
Lastly, we can essentially repeat the above argument for continuous potentials $u_j$ converging uniformly to $u$, concluding the last statement of the proposition.
\end{proof}

\subsection{Examples of differentiable $L^2$ isometries on $\mathcal H_\o$} 
In this short subsection we describe  three examples of global differentiable $L^2$ isometries on $\mathcal H_\o$. Later we will argue that in fact all  isometries arise as compositions of these examples.

$\bullet$ First we take a closer look at the Monge--Amp\`ere flip $\mathcal I:\mathcal H_\omega \to \mathcal H_\o$, defined in  Section 1, perhaps first introduced in \cite{Le17a}. Let $[0,1] \ni t \to \gamma_t \in \mathcal H_\o$ be a smooth curve. Since $\frac{d}{dt}I(\gamma_t) = \int_X \dot \gamma_t \omega_{\gamma_t}^n$, we obtain that
$$\int_X \Big(\frac{d}{dt} \mathcal I(\gamma_t)\Big)^2 \omega_{\gamma_t}^n=\int_X \Big(\dot \gamma_t - 2 \int_X \dot \gamma_t \omega_{\gamma_t}^n\Big)^2= \int_X \dot \gamma_t^2  \omega_{\gamma_t}^n,$$
hence $\mathcal I$ is indeed an involutive $L^2$ isometry, with $a=1$ and $b=2$ (see Theorem \ref{thm: Lempert_isom}). This simple map has the following intriguing property, that will help to adjust the $b$ parameter of arbitrary isometries without changing the $a$ parameter:

\begin{lemma}\label{lem: b_flip} Suppose that $F: \mathcal H_\omega \to \mathcal H_\omega$ is a differentiable $L^2$ isometry. The $a$ parameter of $F$ and $F \circ \mathcal I$ is always the same. Regarding the $b$ parameter the following hold:\\
\noindent (i) If $b=0$ for $F$, then $b=2a$ for $F \circ \mathcal I$.\\ 
\noindent (ii) If $b=2a$ for $F$, then $b=0$ for $F \circ \mathcal I$.
\end{lemma}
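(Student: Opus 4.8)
The plan is to exploit the chain rule for the differential $(F \circ \mathcal I)_*$ together with the explicit differential of $\mathcal I$, and then match the result against the four allowed forms in Theorem \ref{thm: Lempert_isom}. First I would record the differential of the Monge--Amp\`ere flip. Since $\frac{d}{dt}I(\gamma_t) = \int_X \dot\gamma_t\, \omega_{\gamma_t}^n$, differentiating $\mathcal I(u) = u - 2I(u)$ gives, for $\xi \in T_u\mathcal H_\o \simeq C^\infty(X)$,
\beq\label{eq: I_diff}
\mathcal I_*(u)\xi = \xi - 2\int_X \xi\, \omega_u^n.
\eeq
Note that $\mathcal I_*(u)$ is a self-map of $C^\infty(X)$ of the model form appearing in \eqref{eq: Lempert_formula} with $G_u = \mathrm{id}$, $a=1$, $b=2$. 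The key structural observation is that $\mathcal I$ fixes the volume form: because $I(\mathcal I(u)) = -I(u)$ and, more directly, $\omega_{\mathcal I(u)}^n = \omega_u^n$ since $\mathcal I(u)$ differs from $u$ only by the additive constant $-2I(u)$. This last point is what makes the computation clean and is worth stating explicitly at the outset.

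Next I would apply the chain rule at a point $u$. Writing $w := \mathcal I(u)$, we have $(F\circ\mathcal I)_*(u)\xi = F_*(w)\big(\mathcal I_*(u)\xi\big)$. Using Theorem \ref{thm: Lempert_isom} for $F$ at the point $w$, with its diffeomorphism $G_w$ and parameters $a, b$, together with \eqref{eq: I_diff}, I would compute
\beq\label{eq: chain}
(F\circ\mathcal I)_*(u)\xi = a\,\Big(\xi - 2\int_X \xi\,\omega_u^n\Big)\circ G_w - b\int_X \Big(\xi - 2\int_X \xi\,\omega_u^n\Big)\omega_w^n.
\eeq
Since the inner integral $\int_X \xi\,\omega_u^n$ is a constant on $X$, precomposition with $G_w$ leaves it unchanged, so the first term simplifies to $a\,\xi\circ G_w - 2a\int_X \xi\,\omega_u^n$. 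For the second term I would use $\int_X \omega_w^n = V = 1$ (the normalization of Section 2) together with $\omega_w^n = \omega_u^n$, which gives $\int_X\big(\xi - 2\int_X\xi\,\omega_u^n\big)\omega_w^n = \int_X \xi\,\omega_u^n - 2\int_X\xi\,\omega_u^n = -\int_X \xi\,\omega_u^n$. Substituting back, \eqref{eq: chain} becomes
\beq\label{eq: final_form}
(F\circ\mathcal I)_*(u)\xi = a\,\xi\circ G_w - (2a - b)\int_X \xi\,\omega_u^n.
\eeq

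Finally I would read off the parameters of $F\circ\mathcal I$ from \eqref{eq: final_form}, matching it against the template \eqref{eq: Lempert_formula}: the composite has the same $a$ parameter as $F$ (so the $a$ parameter is preserved, as claimed), associated diffeomorphism $G_w = G_{\mathcal I(u)}$, and $b$ parameter equal to $2a - b$. Then (i) reading $b=0$ for $F$ yields $2a - 0 = 2a$ for $F\circ\mathcal I$, and (ii) reading $b=2a$ for $F$ yields $2a - 2a = 0$ for $F\circ\mathcal I$. I anticipate the only mild subtlety is the legitimacy of reading off the parameters from \eqref{eq: final_form}: strictly, \eqref{eq: final_form} shows $(F\circ\mathcal I)_*(u)$ has the correct form, and since $F\circ\mathcal I$ is itself a differentiable $L^2$ isometry (being a composition of two such maps), Theorem \ref{thm: Lempert_isom} applies to it and guarantees its own uniquely determined parameters; comparing the two expressions forces the identification. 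The main obstacle, if any, is simply keeping the bookkeeping of the integral-against-$\omega_u^n$ terms correct, which the volume-form invariance $\omega_{\mathcal I(u)}^n = \omega_u^n$ and the normalization $V=1$ resolve.
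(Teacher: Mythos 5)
Your proposal is correct and follows essentially the same route as the paper: chain rule applied to $(F\circ\mathcal I)_*$, the explicit differential $\mathcal I_*(u)\xi = \xi - 2\int_X \xi\,\omega_u^n$, Lempert's formula for $F$, and the observations $\omega_{\mathcal I(u)}^n = \omega_u^n$ and $V=1$ to collapse the integral terms. The only (cosmetic) difference is that you carry general parameters $(a,b)$ through a single computation to get $b_{F\circ\mathcal I} = 2a - b_F$, whereas the paper checks the four cases $(a,b)\in\{(\pm 1,0),(\pm 1,\pm 2)\}$ separately.
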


\begin{proof}
Let $[0,1] \ni t \to \gamma_t \in \mathcal H_\omega$ be a smooth curve.  Then we have that
$$\frac{d}{dt} F (\mathcal I(\gamma_t)) = F_* (\mathcal I_* \dot \gamma_t) = F_*\Big(\dot \gamma_t - 2\int_X \dot \gamma_t \omega_{\gamma_t}^n\Big).$$
If $a=1$ and $b=0$ for $F$, then we get that $\frac{d}{dt}F (\mathcal I(\gamma_t)) = \dot \gamma_t \circ G_u - 2\int_X \dot \gamma_t \omega_{\gamma_t}^n$. If $a=-1$ and $b=0$ for $F$, then $\frac{d}{dt}F (\mathcal I(\gamma_t)) = -\dot \gamma_t \circ G_u + 2\int_X \dot \gamma_t \omega_{\gamma_t}^n$, addressing (i).

In case $a=1$ and $b=2a$ for $F$, then  $\frac{d}{dt}F (\mathcal I(\gamma_t)) = \dot \gamma_t \circ G_u$. Similarly, if 
$a=-1$ and $b=2a$ for $F$, then  $\frac{d}{dt}F (\mathcal I(\gamma_t)) = -\dot \gamma_t \circ G_u$, addressing (ii).
\end{proof}

$\bullet$ Now let $f: X \to X$ be a biholomorphism preserving the K\"ahler class $[\omega]$. Then $f$ induces a map $L_f: \mathcal H \to \mathcal H$ via pullbacks: $\omega_{L_f(u)}:= f^* \omega_u$, where we made the identification $\mathcal H \simeq I^{-1}(0)$. Using this identification it is possible to describe the action of $F$ on the level of potentials in the following manner \cite[Lemma 5.8]{DR17}:
\begin{equation}\label{eq: aut_action_DR17}
L_f(u)= L_f(0) + u \circ F, \ \ \ u \in I^{-1}(0),
\end{equation}
where $0 \in I^{-1}(0)$ is simply the zero K\"ahler potential. More importantly, $L_f$ further extends to a map $L_f:\mathcal H_\omega \to \mathcal H_\omega$ in the following manner:
$$L_f(v) = L_f(v - I(v)) + I(v), \ \ v \in \mathcal H_\omega.$$
It is well known that $L_f$ thus described gives a differentiable $L^2$ isometry of $\mathcal H_\o$ with $a=1$ and $b=0$. Actually, using the language of Theorem 2.1 applied to $L_f$, we obtain that $G_u = f$ for all $u \in \mathcal H_\omega$. We leave the related simple computation to the reader.\medskip

$\bullet$ Now let $g: X \to X$ be an anti-biholomorphism that flips the K\"ahler class $[\omega]$. By definition, such a map is a diffeomorphism  satisfying $\frac{\partial g_j}{\partial z_k} = 0$ for all $j,k \in \{1,\ldots,n\}$ in any choice of local coordinates. For example, the map $g(z)=\bar z$ is an anti-biholomorphism of the unit torus $\Bbb C / \Bbb Z[i]$ that flips that class of the flat K\"ahler metric.

Such a map $g$ induces another map $N_g: \mathcal H \to \mathcal H$ via pullbacks: $\omega_{N_g(u)}:= -g^* \omega_u$. Here we used again the identification $\mathcal H \simeq I^{-1}(0)$. Similar to \eqref{eq: aut_action_DR17}, it is possible to describe the action of $N_g$ on the level of potentials in the following manner:
\begin{equation}\label{eq: aut_anti_holo}
N_g(u)= N_g(0) + u \circ g, \ \ \ u \in I^{-1}(0).
\end{equation}
To show this, we have to go through the proof of \cite[Lemma 5.8]{DR17} in the anti-holomorphic context. As a beginning remark, we notice that $g^* \ddbar v = -\ddbar v \circ g$ for all smooth functions $v$. With this in mind, we have that
$$\omega + i\ddbar (N_g(0) + u \circ g)=- g^* \omega - g^* i\ddbar u=- g^* \omega_u=\omega_{N_g(u)}= \omega + i\ddbar N_g(u).$$ 
In particular, $N_g(0) + u \circ g - N_g(u)$ is a constant. To show that this constant is equal to zero, we only need to argue that $I(N_g(0) + u \circ g)=0= I(N_g(u))$. But this holds because of the following computation:
\begin{flalign*}
I(N_g(0) + u \circ g)&= I(N_g(0) + u \circ g)- I(N_g(0))= \frac{1}{n+1}\sum_{j=0}^n\int_X (u \circ g) \omega_{N_g(0) + u \circ g}^j \wedge \omega_{N_g(0)}^{n-j}\\
&=\frac{\pm 1}{n+1}\sum_{j=0}^n\int_X (u \circ g) g^*( \omega_{u}^j \wedge \omega^{n-j})\\
&=\frac{\pm 1}{n+1}\sum_{j=0}^n\int_X u   \omega_{u}^j \wedge \omega^{n-j}=\pm I(u)=0.
\end{flalign*}
As above, $N_g$ extends to a map $N_g:\mathcal H_\omega \to \mathcal H_\omega$ in the following manner:
$$
N_g(v) = N_g(v - I(v)) + I(v), \ \ v \in \mathcal H_\omega.
$$
We point out that $N_g$ thus described gives a differentiable $L^2$ isometry of $\mathcal H_\o$ with $a=1$ and $b=0$. To see this, let $[0,1] \ni \to \gamma_t \in \mathcal H_\o$ be a smooth curve. Using \eqref{eq: aut_anti_holo} we can write the following
$$\frac{d}{dt}N_g(\gamma_t) = \frac{d}{dt}(\gamma_t \circ g - I(\gamma_t)) + \frac{d}{dt}I(\gamma(t))= \dot \gamma_t \circ g.$$
In the language of Theorem 2.1 applied to $N_g$, we actually obtained that $G_u = g$ for all $u \in \mathcal H_\omega$.

\section{Proof of Theorem \ref{thm: no_smooth_symmetry_intr}}

The argument of Theorem \ref{thm: no_smooth_symmetry_intr} is split into two parts. First we show that there exist no global differentiable isometries with $a=-1$. Later we will classify all global differentiable isometries with $a=1$.

Before we go into specific details, we recall the following simple lemma that will be used numerous times in our arguments:

\begin{lemma}\label{lem: Da17a}\textup{\cite[Lemma 3.1]{Da17a}} Suppose that $u_0,u_1 \in \mathcal C_\omega$ and $[0,1] \ni t \to u_t \in \mathcal E^2_\omega$ is the $d_2$-geodesic connecting these potentials. Then we have that
$$\inf_X \dot u_0 = \inf_X (u_1 - u_0), \ \ \ \ \sup_X \dot u_0 = \sup_X (u_1 - u_0).$$
\end{lemma}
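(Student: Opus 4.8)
The plan is to prove the two identities simultaneously, since the $\inf$ statement is symmetric to the $\sup$ statement; I will describe the $\sup$ case in detail. Write $M := \sup_X (u_1 - u_0)$ and $m := \inf_X(u_1-u_0)$. Because $u_0,u_1 \in \mathcal C_\omega$, the difference $u_1 - u_0$ is continuous on the compact $X$, so $M$ and $m$ are \emph{attained}, say at $x_0, x_1 \in X$. The whole argument rests on two elementary consequences of the envelope description \eqref{eq: geod_env_def}: a comparison principle and the identification of ``affine'' geodesics. First, by convexity of $t \mapsto u_t(x)$ for a.e. $x$, the right derivative $\dot u_0(x) = \lim_{t\to 0^+}\frac{u_t(x)-u_0(x)}{t}$ exists a.e.

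The comparison principle I would use is: if $u_0 \le u_0'$ and $u_1 \le u_1'$, then the corresponding $d_2$-geodesics satisfy $u_t \le u_t'$ \emph{pointwise everywhere}. This is immediate from \eqref{eq: geod_env_def}, since every subgeodesic admissible for $(u_0,u_1)$ has boundary $\limsup$'s bounded by $u_0',u_1'$ and is thus admissible for $(u_0',u_1')$, so the defining supremum only grows. The pointwise (not merely a.e.) validity is essential, as I will evaluate at single points. Second, the geodesic between $\phi$ and $\phi + c$ (a constant $c$) is exactly $t \mapsto \phi + tc$: the affine curve is a subgeodesic (adding the pluriharmonic $\mathrm{Re}(s)\,c$ preserves $\omega$-plurisubharmonicity on $S\times X$), so the envelope dominates it; conversely, convexity of $t\mapsto v_t(x)$ gives the chord bound $v_t \le (1-t)\phi + t(\phi+c) = \phi + tc$ for every admissible $v_t$, so the envelope is dominated by it.

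With these in hand I would sandwich $u_t(x_0)$. Since $u_1 \le u_0 + M$, comparison against the affine geodesic from $u_0$ to $u_0+M$ gives $u_t \le u_0 + tM$ everywhere; in particular $\frac{u_t - u_0}{t} \le M$, so $\dot u_0 \le M$ a.e., hence $\sup_X \dot u_0 \le M$. For the reverse, note $u_1 - M \le u_0$, so comparison against the affine geodesic from $u_1 - M$ to $u_1$, namely $t \mapsto u_1 - (1-t)M$, gives $u_1 - (1-t)M \le u_t$ everywhere. Evaluating both bounds at $x_0$, where $u_1(x_0)-u_0(x_0)=M$, yields
\[
u_0(x_0) + tM \;=\; u_1(x_0) - (1-t)M \;\le\; u_t(x_0) \;\le\; u_0(x_0) + tM,
\]
so $u_t(x_0) = u_0(x_0) + tM$ for all $t\in(0,1)$. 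Thus $t \mapsto u_t(x_0)$ is genuinely affine (in particular finite, so $x_0$ avoids the exceptional set, consistent with \eqref{eq: limit_t_0_1}), and $\dot u_0(x_0) = M$ honestly exists. Combined with $\dot u_0 \le M$ a.e., this gives $\sup_X \dot u_0 = M = \sup_X(u_1-u_0)$. The $\inf$ identity follows by the mirror argument: $u_0 + m \le u_1$ gives $u_0 + tm \le u_t$ (so $\dot u_0 \ge m$), while $u_0 \le u_1 - m$ gives $u_t \le u_1 - (1-t)m$, and sandwiching at $x_1$ forces $u_t(x_1)=u_0(x_1)+tm$, whence $\dot u_0(x_1)=m$ and $\inf_X \dot u_0 = m$.

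The only genuinely delicate point is ensuring the comparison principle holds \emph{pointwise everywhere} rather than merely a.e., so that the sandwich is legitimate at the individual extremal points $x_0,x_1$; this is why I phrase it directly through the supremum in \eqref{eq: geod_env_def} rather than through an a.e. limiting statement. Everything else — existence of the maximizers, the convexity chord bound, and the identification of affine geodesics — is routine, and the continuity hypothesis $u_0,u_1\in\mathcal C_\omega$ enters exactly once, to guarantee that $M$ and $m$ are attained.
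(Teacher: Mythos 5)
Your proof is correct and is essentially the paper's own argument: both rest on the affine subgeodesic lower bounds coming from the envelope formula \eqref{eq: geod_env_def}, the $t$-convexity chord bound from above, and evaluation at extremal points of $u_1-u_0$ (attained by continuity), forcing $t\mapsto u_t(x)$ to be affine there. Your repackaging via a ``comparison principle'' and identification of affine geodesics is only a cosmetic reorganization of the same two inequalities the paper writes down directly.
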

\begin{proof} First we argue that $ \inf_X \dot u_0 = \inf_X (u_1-u_0)$. From \eqref{eq: geod_env_def} we obtain the estimate $u_t \geq u_0 + t \inf_X (u_1-u_0), \ t \in[0,1]$. In particular, $\dot u_0 \geq \inf_X (u_1-u_0)$. Using $t$-convexity it follows that $u_t(y) = u_0(y) + t \inf_X (u_1-u_0)$ for $y \in X$ such that $u_1(y)-u_0(y)=\inf_X (u_1-u_0)$. This implies that $t \to u_t(y)$ is linear, implying that $ \inf_X \dot u_0 = \inf_X (u_1 - u_0)$.

For the second identity, we notice that $t$-convexity implies $\sup_X \dot u_0 \leq \sup_X (u_1 - u_0)$. In addition, \eqref{eq: geod_env_def} implies that $u_1 - (1-t)\sup_X (u_1-u_0) \leq u_t, \ t \in[0,1]$. Relying on $t$-convexity again, we obtain that $\dot u_0(z)= u_1(z) - u_0(z) =\sup_X (u_1 - u_0)$, for $z \in X$ with $u_1(z)-u_0(z) =\sup_X (u_1-u_0)$. Summarizing, we obtain that $\sup_X \dot u_0 = \sup_X (u_1 - u_0)$, as desired.
\end{proof}
\subsection{Isometries with $a=-1$} We start with a lemma: 
\begin{lemma}\label{lem: inf_sup} Suppose that $F:\mathcal H_\omega \to \mathcal H_\omega$ is a differentiable $L^2$ isometry with $a=-1$ and $b=0$. Let $\phi \in \mathcal H_\o$ and $u \in \mathcal H_\o$ with $u \leq \phi$. Then we have that $F(u) \geq F(\phi)$ and
\begin{equation}\label{eq: inf_sup_id}
\sup_X (F(u)-F(\phi)) = -\inf_X (u-\phi).
\end{equation}
\end{lemma}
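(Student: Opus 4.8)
The plan is to split the statement into its two assertions and handle them in sequence, exploiting the interplay between the monotonicity properties recorded in Proposition~\ref{prop: monotonicity} and the metric/boundary characterization in Lemma~\ref{lem: Da17a}. The inequality $F(u) \geq F(\phi)$ is immediate: since $F$ is a differentiable $L^2$ isometry with $a=-1$ and $b=0$, Proposition~\ref{prop: monotonicity}(ii) applies directly, and $u \leq \phi$ yields the order-reversal $F(u) \geq F(\phi)$. So the real content is the identity \eqref{eq: inf_sup_id}.

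\medskip

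For the identity, the strategy is to relate both sides to the $d_2$-geodesic data via Lemma~\ref{lem: Da17a}. First I would consider the $d_2$-geodesic $t \to u_t$ in $\mathcal E^2_\o$ connecting $u_0 := \phi$ and $u_1 := u$ (both lie in $\mathcal H_\o \subset \mathcal C_\o$, so the lemma applies). Because $F$ extends to a $d_2$-isometry of $\mathcal E^2_\o$, the image curve $t \to F(u_t)$ is again a $d_2$-geodesic, this time connecting $F(\phi)$ and $F(u)$, both of which lie in $\mathcal C_\o$. Applying Lemma~\ref{lem: Da17a} to this image geodesic gives
\[
\sup_X (F(u) - F(\phi)) = \sup_X \frac{d}{dt}\Big|_{t=0} F(u_t).
\]
The key computation is to identify the right-hand side using Theorem~\ref{thm: Lempert_isom}: with $a=-1$ and $b=0$ we have $F_*(v)\xi = -\,\xi \circ G_v$, so $\frac{d}{dt}|_{t=0} F(u_t) = -\,\dot u_0 \circ G_\phi$. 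Since $G_\phi$ is a diffeomorphism of $X$ it preserves suprema and infima, whence $\sup_X(-\dot u_0 \circ G_\phi) = -\inf_X \dot u_0$. Finally, Lemma~\ref{lem: Da17a} applied to the \emph{original} geodesic gives $\inf_X \dot u_0 = \inf_X(u_1 - u_0) = \inf_X(u - \phi)$, and combining these yields exactly $\sup_X(F(u) - F(\phi)) = -\inf_X(u-\phi)$.

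\medskip

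The main obstacle I anticipate is the interchange of differentiation ($\frac{d}{dt}|_{t=0}$) with the isometry $F$ when the geodesic $t \to u_t$ may only be $C^{1,1}$ rather than smooth, and the justification that $F(u_t)$ has a well-defined derivative at $t=0$ whose $\sup$ equals $\sup_X(F(u)-F(\phi))$. Here one must verify that the hypotheses of Lemma~\ref{lem: Da17a} genuinely hold for the image geodesic, which requires knowing $F(\phi), F(u) \in \mathcal C_\o$; this is supplied by Proposition~\ref{prop: C_stable}, which guarantees $F(\mathcal C_\o) \subset \mathcal C_\o$ for isometries with $b=0$. A secondary technical point is ensuring that $F_*$ acts on the initial velocity $\dot u_0 \in L^2(\omega_\phi^n)$ via the same formula \eqref{eq: Lempert_formula} established for smooth tangent vectors; since $u_0=\phi \in \mathcal H_\o$ and the geodesic is $C^{1,1}$ up to $t=0$ when the endpoints are smooth, this should follow from continuity of $(F,F_*)$ as a map of Fréchet spaces, but it is the step deserving the most care.
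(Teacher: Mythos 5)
Your reduction of the problem to initial velocities of geodesics via Lemma \ref{lem: Da17a} matches the paper's overall strategy, and the first assertion $F(u) \geq F(\phi)$ is handled identically. But the central step of your argument --- the identity $\frac{d}{dt}\big|_{t=0}F(u_t) = -\,\dot u_0 \circ G_\phi$ --- has a genuine gap, and the justification you offer cannot be repaired by the tools you cite. The formula \eqref{eq: Lempert_formula} is a statement about the differential $F_*$ of the map $F:\mathcal H_\o \to \mathcal H_\o$, and the chain rule $\frac{d}{dt}F(u_t) = F_*(u_t)\dot u_t$ is only available along $C^1$ curves \emph{inside} $\mathcal H_\o$. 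The $d_2$-geodesic $t \to u_t$ joining two smooth potentials is only $C^{1,1}$: for $t \in (0,1)$ the potentials $u_t$ are in general not smooth, hence not elements of $\mathcal H_\o$ at all, so $F(u_t)$ is defined only through the metric extension $F:\mathcal E^2_\o \to \mathcal E^2_\o$, which carries no differentiable structure and no $F_*$. In particular, ``continuity of $(F,F_*)$ as a map of Fr\'echet spaces'' cannot bridge this, because the geodesic leaves the domain of $F_*$ for every $t>0$. The statement you need is precisely the $a=-1$ analog of the paper's Proposition \ref{prop: geod_tangent_a=1} (which treats $a=1$, $b=0$), and the proof of that proposition is a substantial argument: one approximates the $C^{1,1}$-geodesic by Chen's smooth $\varepsilon$-geodesics (which \emph{are} $C^1$ curves in $\mathcal H_\o$), applies the chain rule there, passes to the limit using monotonicity and Proposition \ref{prop: C_stable} to obtain an inequality, and then upgrades the inequality to an equality by comparing $L^2$-norms. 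Nothing of this sort is available off the shelf for $a=-1$, so your proof as written is incomplete at its key step.

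It is worth noting how the paper's own proof sidesteps this difficulty entirely: it never identifies $\dot{F(u_0)}$ with $-\dot u_0 \circ G_\phi$. Instead it uses the observation (from \cite{Le17a}) that Theorem \ref{thm: Lempert_isom} makes $F$ a $d_p$-isometry for \emph{every} $p \geq 1$, so that by \cite[Theorem 1]{Da15} the $L^p$-norms of the initial velocities of the two geodesics coincide for all $p$; the comparison principle gives the sign information $\dot u_0 \leq 0$ and $\dot{F(u_0)} \geq 0$, and letting $p \to \infty$ converts the equality of $L^p$-norms into $\sup_X \dot{F(u_0)} = -\inf_X \dot u_0$, after which Lemma \ref{lem: Da17a} finishes exactly as in your plan. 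If you want to salvage your route instead, you would first have to prove the $a=-1$ version of Proposition \ref{prop: geod_tangent_a=1}, which is essentially as much work as the lemma itself.
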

\begin{proof} That $F(u) \geq F(\phi)$ follows from Proposition \ref{prop: monotonicity}(ii). As it is pointed out on \cite[p.2]{Le17a}, Theorem \ref{thm: Lempert_isom}  implies that $F$ is a $d_p$-isometry for any $p \geq 1$. This implies that $d_p(\phi,u) = d_p(F(\phi),F(u))$ for any $p \geq 1$. 

Let $[0,1] \ni t \to u_t,v_t \in \mathcal H^{1,1}_\omega$ be the $C^{1,1}$ geodesic connecting $u_0:=\phi,u_1:=u$, respectively $v_0:=F(\phi)$ and $v_1:= F(u)$. By the comparison principle for weak geodesics (see for example \cite[Proposition 2.2]{Bl12}) it follows that $v_t \geq F(\phi)$ and $u_t \leq \phi$ for any $t \in [0,1]$. In particular, $\dot v_0 \geq 0$ and $\dot u_0 \leq 0$. 

Using \cite[Theorem 1]{Da15} we arrive at:
$$\int_X |\dot u_0|^p \omega_\phi^n = d_p(\phi,u)^p = d_p(F(\phi),F(u))^p = \int_X |\dot v_0|^p \omega_{F(\phi)}^n, \ p \geq 1.$$
Raising  to the $\frac{1}{p}$-power, and letting $p \to \infty$ gives  that 
\begin{equation}\label{eq: dot_id}
\sup_X \dot v_0 = - \inf_X \dot u_0. 
\end{equation}
From Lemma \ref{lem: Da17a} we get that $ \inf_X \dot u_0 = \inf_X (u-\phi)$ and $\sup_X \dot v_0 =\sup_X (F(u) - F(\phi))$. Putting this together with \eqref{eq: dot_id}, we obtain \eqref{eq: inf_sup_id}, as desired.
\end{proof}

\begin{theorem}\label{thm: no_smooth_symmetry_a=-1} There exists no differentiable $L^2$ isometry $F:\mathcal H_\o \to \mathcal H_\o$ with $a=-1$.
\end{theorem}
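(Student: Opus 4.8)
The goal is to rule out all differentiable $L^2$ isometries with $a=-1$. By Lemma \ref{lem: b_flip}, composing $F$ with the \MA flip $\mathcal I$ leaves the $a$ parameter unchanged and toggles the $b$ parameter between $0$ and $2a$. Hence it suffices to derive a contradiction in the normalized case $a=-1$, $b=0$, since an isometry with $a=-1$, $b=2a$ yields one with $a=-1$, $b=0$ upon composing with $\mathcal I$ (and $\mathcal I$ is itself a differentiable $L^2$ isometry). So the plan is to assume throughout that $F$ has $a=-1$ and $b=0$, apply Lemma \ref{lem: inf_sup}, and reach an absurdity.

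The heart of the matter is that Lemma \ref{lem: inf_sup} forces the order-reversing isometry $F$ to interchange suprema and infima in a rigid way. First I would exploit the additive structure: for $u \leq \phi$ we have both $\sup_X(F(u)-F(\phi)) = -\inf_X(u-\phi)$, and by applying the same lemma with the roles governed by monotonicity one expects a companion identity controlling $\inf_X(F(u)-F(\phi))$ in terms of $\sup_X(u-\phi)$. The idea is to test these identities on a cleverly chosen family of potentials where the oscillation behaves in an incompatible way under $F$. A natural choice is to take $\phi$ fixed and let $u = \phi - c$ for large constants $c>0$; then Proposition \ref{prop: monotonicity}(ii) gives $F(\phi-c)=F(\phi)+c$, so $\sup_X(F(\phi-c)-F(\phi)) = c$, while $-\inf_X((\phi-c)-\phi)=c$ as well, which is merely consistent. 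The genuine contradiction must instead come from non-constant comparisons: I would pick $u,\phi$ with $u\le\phi$ but whose difference $\phi-u$ has prescribed, unequal maximum and minimum oscillation, and then compose two such comparisons (using $F^{-1}$, which by the classification also has $a=-1$, $b=0$) to produce a chain of inequalities of the form $\mathrm{osc}_X(F(u)-F(\phi)) \le \mathrm{osc}_X(u-\phi)$ in one direction and the reverse in the other, pinning $F$ to be an isometry for the $\sup$-oscillation seminorm in a way that an order-\emph{reversing} map cannot sustain.

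Concretely, the key step I would carry out is this. Fix $\phi$ and choose a family $u_s = \phi + s\,w$ for $s\in[-\delta,0]$ with a fixed $w\in C^\infty(X)$ normalized so that $\sup_X w = 1$ and $\inf_X w = 0$ (so $u_s \le \phi$ for $s\le 0$ and the oscillation of $u_s-\phi$ is exactly $|s|$). Applying Lemma \ref{lem: inf_sup} gives $\sup_X(F(u_s)-F(\phi)) = -\inf_X(u_s-\phi) = -s\cdot 0 = 0$ for the point where $w$ vanishes — but $F(u_s)\ge F(\phi)$ forces $F(u_s)=F(\phi)$ at the sup, i.e. $\sup_X(F(u_s)-F(\phi))=0$, which combined with $F(u_s)\ge F(\phi)$ and $F(u_s)\neq F(\phi)$ (injectivity) is immediately contradictory, since a nonnegative function with zero supremum must vanish identically. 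This shows that the normalization $\inf_X w = 0$ already yields the collapse $F(u_s)=F(\phi)$ for all small $s$, contradicting that $F$ is injective. The only subtlety is ensuring such $w$ with $\inf_X w = 0$ attained and $u_s$ remaining in $\mathcal H_\omega$, which holds for $|s|$ small.

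\textbf{Main obstacle.} The step I expect to be delicate is guaranteeing that the extremal identity from Lemma \ref{lem: inf_sup} can be activated with $\inf_X(u-\phi)=0$ while keeping $u\le \phi$ strict and $u\ne\phi$, so that the forced equality $F(u)=F(\phi)$ genuinely contradicts injectivity rather than trivially holding. One must check that $u=\phi+sw$ stays a genuine \K potential and that $w$ can be chosen with $\inf_X w=0$ attained but $w\not\equiv 0$; both are routine for small $s$ but require that the sup/inf in Lemma \ref{lem: inf_sup} are realized pointwise, which is exactly what that lemma (via Lemma \ref{lem: Da17a}) provides. Should this direct collapse fail to apply cleanly, the fallback is the oscillation-comparison chain using $F$ and $F^{-1}$ together, showing $F$ preserves $\sup$-oscillation exactly while reversing order, and then producing a three-potential configuration $u\le v\le\phi$ whose images violate the triangle-type additivity of the oscillation — this is the more robust route and the place where I would concentrate the technical effort.
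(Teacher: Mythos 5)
Your reduction to the case $a=-1$, $b=0$ via Lemma \ref{lem: b_flip} matches the paper, but your ``concrete key step'' collapses because of a sign error, and once it is corrected the contradiction disappears. With $u_s = \phi + sw$, $s<0$, $0\le w\le 1$, you have $\inf_X(u_s-\phi) = \inf_X(sw) = s\,\sup_X w = s$, \emph{not} $s\cdot\inf_X w = 0$: multiplying by a negative constant swaps $\sup$ and $\inf$. So Lemma \ref{lem: inf_sup} gives $\sup_X\bigl(F(u_s)-F(\phi)\bigr) = -s > 0$, which is perfectly consistent with $F$ being injective and order-reversing. More structurally, the configuration you are after is vacuous: if $u\le\phi$ and $\inf_X(u-\phi)=0$, then $u-\phi$ is a nonpositive function with infimum $0$, hence $u\equiv\phi$. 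Your fallback ``oscillation-comparison chain'' also cannot succeed at the level of bounded potentials: applying Lemma \ref{lem: inf_sup} to $F$ and to $F^{-1}$ yields the same identity $\sup_X(F(u)-F(\phi)) = \sup_X(\phi-u)$ in both directions, i.e.\ the lemma only says $F$ is order-reversing and oscillation-preserving, and there is nothing contradictory about such behavior per se (formally it is the behavior of $u\mapsto F(\phi)+(\phi-u)\circ G$). No purely finite, smooth-potential bookkeeping of sups and infs will rule it out.

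The idea you are missing is the asymmetry of plurisubharmonic functions: every element of $\mathcal E^2_\o$ is upper semicontinuous on the compact $X$, hence bounded \emph{above}, but finite-energy potentials can be unbounded \emph{below}. The paper extends $F$ to a $d_2$-isometry of the completion $\mathcal E^2_\o$, picks $u\in\mathcal E^2_\o\setminus L^\infty$ with $u\le\phi-1$ and a decreasing approximation $u_k\in\mathcal H_\o$, $u_k\le\phi$, $u_k\searrow u$. Then $\inf_X(u_k-\phi)\searrow-\infty$, so Lemma \ref{lem: inf_sup} forces $\sup_X F(u_k) = \sup_X(F(u_k)-F(\phi)) + \sup$-normalization to blow up to $+\infty$; on the other hand $d_2(F(u_k),F(u)) = d_2(u_k,u)\to 0$, and by \cite[Theorem 5(i)]{Da15} $d_2$-convergence forces $\sup_X F(u_k)\to\sup_X F(u) < +\infty$, a contradiction. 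In short, an order-reversing isometry would have to send ``unbounded below'' to ``unbounded above,'' which pluripotential theory forbids; this step, requiring the metric completion and the continuity of $\sup_X$ under $d_2$-convergence, is absent from your proposal and cannot be replaced by the finite oscillation arguments you sketch.
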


We note that this result already implies Corollary \ref{cor: no_smooth_symmetry_intr1}.
 
\begin{proof} Due to Lemma \ref{lem: b_flip}, after possibly composing $F$ with $\mathcal I$, we only need to worry about the case $a=-1$ and $b=0$. 

Since $F:\mathcal H_\o \to \mathcal H_\o$ is a differentiable $L^2$-isometry, it is also a $d_2$-isometry, hence it extends to a unique $d_2$-isometry $F:\mathcal E^2_\o \to \mathcal E^2_\o$.

Let $\phi \in \mathcal H_\o$. Let $u \in \mathcal E^2_\o \setminus L^\infty$ with $u \leq \phi -1$, and we choose $u_k \in \mathcal H_\o$ such that $u_k \searrow u$ and $u_k \leq \phi$. Such a sequence can always be found \cite{BK07}.

Due to our choice of $u$ we have that $\inf_X (u_k-\phi) \searrow -\infty$. From Lemma \ref{lem: inf_sup} it follows that $\sup_X F(u_k)=\sup_X (F(u_k)-F(\phi)) \nearrow +\infty$. Since $F$ is a $d_2$-isometry, we have that $d_2(F(u_k),F(u))= d_2(u,u_k) \to 0$. However \cite[Theorem 5(i)]{Da15} gives that $\sup_X F(u_k) \to \sup_X F(u) <+\infty$, which is a contradiction.
\end{proof}

\subsection{Isometries with $a=1$}  

To start, we point out an important relationship between $d_2$-geodesics and  differentiable $L^2$ isometries with $a=1$ and $b=0$:

\begin{proposition}\label{prop: geod_tangent_a=1}Suppose that $F:\mathcal H_\omega \to \mathcal H_\omega$ is a differentiable $L^2$ isometry with $a=1$ and $b=0.$ Let $[0,1] \ni t \to u_t \in \mathcal E^2_\omega$ be the $d_2$-geodesic connecting $u_0 \in \mathcal H_\omega$ and $u_1 \in \mathcal C_\omega$. Then 
\begin{equation}\label{eq: geod_tanget_eq_a=1}
\dot u_0\circ G_{u_0}= \dot{F(u_0)}.
\end{equation}
\end{proposition}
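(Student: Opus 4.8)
Throughout I use that, since $F$ has $a=1$ and $b=0$, Theorem \ref{thm: Lempert_isom} gives $F_*(u)\xi = \xi\circ G_u$, that $G_u$ depends only on the base point (Corollary \ref{cor: G_constant} and the remark after Theorem \ref{thm: Lempert_isom}), and that $F$ is order preserving with $F(u+c)=F(u)+c$ (Proposition \ref{prop: monotonicity}(i)). The plan is to first prove \eqref{eq: geod_tanget_eq_a=1} when both endpoints are smooth, where the $C^{1,1}$ regularity of geodesics \cite{Ch00,Bl12,CTW17} together with Chen's $\epsilon$-geodesic approximation makes $F_*$ accessible along the curve, and then to pass to the limit $u_1\in\mathcal C_\o$ by monotone/dominated convergence. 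Write $v_t:=F(u_t)$, which is again a $d_2$-geodesic since $F$ is a $d_2$-isometry.

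\textbf{Step 1: smooth endpoints.} Suppose first that $u_0,u_1\in\mathcal H_\o$, so that $t\mapsto u_t$ and $t\mapsto v_t$ are $C^{1,1}$ geodesics between potentials of $\mathcal H_\o$, and $\dot u_0,\dot v_0$ are genuine $t$-derivatives. I would take Chen's $\epsilon$-geodesics $u^\epsilon_t\in\mathcal H_\o$ with the same endpoints: these are smooth in $(t,x)$, satisfy $u^\epsilon_t\ge u_t$ with $u^\epsilon_0=u_0$, decrease to $u_t$, and obey $\dot u^\epsilon_0\to\dot u_0$ uniformly as $\epsilon\searrow 0$ \cite{Ch00,Bl12,CTW17}. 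Since $t\mapsto u^\epsilon_t$ is a $C^1$ curve in $\mathcal H_\o$ and $F$ is $C^1$, the chain rule yields $\frac{d}{dt}\big|_{t=0}F(u^\epsilon_t)=\dot u^\epsilon_0\circ G_{u_0}$. On the other hand, monotonicity of $F$ gives $F(u^\epsilon_t)\ge v_t$ with equality at $t=0$; comparing one-sided $t$-derivatives at $0$ pointwise yields $\dot u^\epsilon_0\circ G_{u_0}\ge\dot v_0$, and letting $\epsilon\to 0$ gives $\dot u_0\circ G_{u_0}\ge\dot v_0$. For the reverse inequality I would apply the identical argument to $F^{-1}$: from $F_*(u)\xi=\xi\circ G_u$ and the chain rule for $F^{-1}\circ F=\mathrm{Id}$, the map $F^{-1}$ is again a differentiable $L^2$ isometry with $a=1$, $b=0$ whose diffeomorphism at $v_0=F(u_0)$ is $G_{u_0}^{-1}$; running Step 1 for $F^{-1}$ along the geodesic $t\mapsto v_t$ (whose endpoints lie in $\mathcal H_\o$) gives $\dot v_0\circ G_{u_0}^{-1}\ge\dot u_0$, i.e. $\dot v_0\ge\dot u_0\circ G_{u_0}$. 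The two inequalities prove \eqref{eq: geod_tanget_eq_a=1} when $u_1\in\mathcal H_\o$.

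\textbf{Step 2: approximation.} For general $u_1\in\mathcal C_\o$, I would pick $u^k_1\in\mathcal H_\o$ with $u^k_1\searrow u_1$ uniformly \cite{BK07}, and let $u^k_t$ be the geodesic from $u_0$ to $u^k_1$. Monotonicity of the envelope \eqref{eq: geod_env_def} gives $u^k_t\searrow u_t$, so writing $\dot u^k_0=\inf_{t>0}(u^k_t-u_0)/t$ and interchanging the two infima yields $\dot u^k_0\searrow\dot u_0$; by Lemma \ref{lem: Da17a} these velocities are uniformly bounded, hence $\dot u^k_0\to\dot u_0$, and after the measure-preserving change of variables induced by $G_{u_0}$ also $\dot u^k_0\circ G_{u_0}\to\dot u_0\circ G_{u_0}$ in $L^2(\omega_{F(u_0)}^n)$. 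Applying $F$ (monotone, with $F(u^k_1)\searrow F(u_1)$ uniformly by Proposition \ref{prop: C_stable}), the geodesics $F(u^k_t)\searrow v_t$, and the same inf-interchange plus Lemma \ref{lem: Da17a} give $\dot{F(u^k_0)}\to\dot{F(u_0)}$ in $L^2(\omega_{F(u_0)}^n)$. Since $G_{u_0}$ is the fixed diffeomorphism attached to the common base point $u_0$, Step 1 gives $\dot{F(u^k_0)}=\dot u^k_0\circ G_{u_0}$ for every $k$, and passing to the $L^2$-limit yields \eqref{eq: geod_tanget_eq_a=1}.

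\textbf{Main obstacle.} The heart of the matter is Step 1: the geodesic $u_t$ is merely $C^{1,1}$, so $F_*$ cannot be applied along it directly. The $\epsilon$-geodesics furnish smooth comparison curves that are tangent to $u_t$ from above, but monotonicity only delivers a one-sided inequality. The decisive device for closing the estimate is the passage to $F^{-1}$, which produces the opposite inequality; note that a pointwise domination together with equality of $L^2$-norms (which the isometry also supplies) would \emph{not} by itself force equality, so this symmetric use of $F$ and $F^{-1}$ is essential.
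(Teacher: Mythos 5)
Your Step 1 (one-sided inequality via Chen's $\varepsilon$-geodesics and monotonicity of $F$) and your Step 2 (monotone approximation of $u_1\in\mathcal C_\omega$ by smooth potentials) follow the paper's proof quite closely, modulo the minor slip that the $\varepsilon$-geodesics \emph{increase} to the $C^{1,1}$ geodesic ($u^\varepsilon_t\nearrow u_t$), not decrease, which merely flips your one-sided inequality to $\dot u_0\circ G_{u_0}\le \dot{F(u_0)}$ and is structurally harmless. The genuine gap is in how you close the estimate. You get the reverse inequality by ``running Step 1 for $F^{-1}$'', but nothing entitles you to treat $F^{-1}$ as a differentiable $L^2$ isometry: the paper's definition only requires $F$ itself to be $C^1$ and bijective, and in this Fr\'echet setting there is no inverse function theorem allowing you to deduce that $F^{-1}$ is $C^1$ from the invertibility of $F_*(u)\xi=\xi\circ G_u$. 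Without that, Theorem \ref{thm: Lempert_isom} cannot be applied to $F^{-1}$ (so its ``structure map $G_{u_0}^{-1}$'' and the chain rule for $F^{-1}\circ F=\mathrm{Id}$ are unjustified), and neither is the monotonicity of $F^{-1}$ that your comparison argument needs: an order-preserving bijection of a partially ordered set need not have an order-preserving inverse, so Proposition \ref{prop: monotonicity} is not available for $F^{-1}$. The pivotal reverse inequality is therefore unproven.

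Ironically, the remark in your ``Main obstacle'' paragraph --- that pointwise domination plus equality of $L^2$ norms could not force equality --- is exactly where you went astray, because that is precisely how the paper finishes, and it works once one adds a sign normalization you omitted. Since $F(u+c)=F(u)+c$ by Proposition \ref{prop: monotonicity}(i), one may assume $u_0>u_1$; the comparison principle for weak geodesics then gives $\dot u_0\le 0$ and $\dot{F(u_0)}\le 0$. Your one-sided bound becomes $\dot u_0\circ G_{u_0}\le \dot{F(u_0)}\le 0$, so squaring \emph{reverses} it: $(\dot u_0\circ G_{u_0})^2\ge \dot{F(u_0)}^2$ pointwise. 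On the other hand, $G_{u_0}^*\omega_{u_0}^n=\pm\,\omega_{F(u_0)}^n$ (Theorem \ref{thm: Lempert_isom}) and the $d_2$-isometry property give $\int_X(\dot u_0\circ G_{u_0})^2\,\omega_{F(u_0)}^n=d_2(u_0,u_1)^2=d_2(F(u_0),F(u_1))^2=\int_X \dot{F(u_0)}^2\,\omega_{F(u_0)}^n$, and equal integrals of pointwise-ordered nonnegative continuous functions force equality everywhere. This closes Step 1 using only $F$, with no appeal whatsoever to $F^{-1}$; your Step 2 then goes through essentially as in the paper.
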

Here and below $\dot u_0 := \frac{d}{dt}\big|_{t=0}F(u_t) $ and $\dot {F(u_0)}:=\frac{d}{dt}\big|_{t=0}F(u_t)$ are the initial tangent vectors of the $d_2$-geodesics $t \to u_t$ and  $t \to F(u_t)$, interpreted according to the discussion preceding Theorem \ref{thm: no_metric_symmetry_intr}. 
\begin{proof} There exists a constant $c \in \Bbb R$ such that $u_0 > u_1 + c$. Since $F(u_t + tc) = F(u_t) + tc$ (Proposition \ref{prop: monotonicity}(i)), we can assume without loss of generality that $u_0 >  u_1$.

First, we show \eqref{eq: geod_tanget_eq_a=1} in case $u_1 \in \mathcal H_\o$. Let $[0,1] \ni u^\varepsilon_t \in \mathcal H_\o$ be the smooth $\varepsilon$-geodesics of X.X. Chen, connecting $u_0$ and $u_1$ \cite{Ch00}. It is well known that $u^\varepsilon_t \nearrow u_t$ as $\varepsilon \to 0$, where $t \to u_t$ is the $C^{1,1}$-geodesic joining $u_0$ and $u_1$. Due to Proposition \ref{prop: monotonicity} and Proposition \ref{prop: C_stable}, for the curves $t \to F(u^\varepsilon_t),F(u_t)$ we obtain that $F(u^\varepsilon_t) \nearrow F(u_t)$. Since $t \to F(u^\varepsilon_t)$ is a $C^1$ curve, via Theorem \ref{thm: Lempert_isom}, we obtain that 
$$\dot u^\varepsilon_0\circ G_{u_0} = \dot{F(u_0^\varepsilon)} \leq \dot{F(u_0)} \leq 0, \ \ \varepsilon >0.$$
Taking the limit $\varepsilon \to 0$, since $u^\varepsilon \to_{C^{1,\alpha}} u$, we arrive at $\dot u_0\circ G_{u_0} \leq \dot{F(u_0)} \leq 0$. By Theorem \ref{thm: Lempert_isom} we have that $G_{u_0}^* \omega_{u_0}^n= \pm \omega_{F(u_0)}^n$. Using this and \cite{Ch00} (see also \cite[Theorem 1]{Da15})  we obtain that 
$$\int_X (\dot u_0\circ G_{u_0})^2 \omega_{F(u_0)}^n = \int_X \dot u_0^2 \omega_{u_0}^n=d_2(u_0,u_1)^2=d_2(F(u_0),F(u_1))^2=\int_X \dot{F(u_0)}{}^2 \omega_{F(u_0)}^n.$$
Due to continuity we conclude that $\dot u_0\circ G_{u_0} = \dot{F(u_0)}$, as desired.

Now we treat the general case. Let $u^k_1 \in \mathcal H_\o, \ k \in \Bbb N$ such that $u_0 > u^k_1$ and $u^k_1 \searrow u_1 \in \mathcal C_\o$. Also, by $[0,1] \ni t \to u_t,u^k_t \in \mathcal E^2_\omega$ we denote the $d_2$-geodesics connecting $u_0$ and $u_1$, respectively $u_0$ and $u^k_1$. Since $F$ is a $d_2$-isometry, we obtain that $[0,1] \ni t \to F(u_t),F(u^k_t) \in \mathcal E^2_\omega$ are the $d_2$-geodesics connecting $F(u_0)$ and $F(u_1)$, respectively $F(u_0)$ and $F(u^k_1)$. Due to $t$-convexity, $k$-monotonicity and Proposition \ref{prop: monotonicity}, we obtain that $\dot u_0^k \searrow \dot u_0$ and $\dot{F(u_0^k)} \searrow \dot{F(u_0)}$. Letting $k \to \infty$ we arrive at the desired conclusion: $\dot u_0\circ G_{u_0} = \lim_k (\dot u_0^k\circ G_{u_0})=\lim_k \dot{F(u_0^k)} = \dot{F(u_0)}$.
\end{proof}

This result together with Lemma \ref{lem: Da17a} gives the following corollary, paralleling Lemma \ref{lem: inf_sup}:
\begin{corollary} Suppose that $F:\mathcal H_\omega \to \mathcal H_\omega$ is a differentiable $L^2$ isometry with $a=1$ and $b=0$. Suppose that $u,v \in \mathcal C_\omega$. Then we have that $F(u) ,F(v) \in \mathcal C_\omega$ and 
\begin{equation}\label{eq: inf_isometry_id}
\inf_X (F(u) - F(v)) = \inf_X (u-v).
\end{equation}
\end{corollary}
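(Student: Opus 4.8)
The plan is to reduce the identity \eqref{eq: inf_isometry_id} to the two tangent-vector results already at our disposal, Lemma \ref{lem: Da17a} and Proposition \ref{prop: geod_tangent_a=1}, while disposing of the membership claim immediately: since $u,v\in\mathcal C_\omega$, Proposition \ref{prop: C_stable} gives $F(u),F(v)\in\mathcal C_\omega$ at once. Thus everything comes down to proving the infimum identity.

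First I would settle the case in which the subtracted potential is smooth, say $v\in\mathcal H_\omega$ while $u\in\mathcal C_\omega$ is arbitrary. Let $[0,1]\ni t\to w_t\in\mathcal E^2_\omega$ be the $d_2$-geodesic with $w_0=v$ and $w_1=u$. Because $F$ is a $d_2$-isometry, $t\to F(w_t)$ is the $d_2$-geodesic joining $F(v)\in\mathcal H_\omega$ and $F(u)\in\mathcal C_\omega$. Both geodesics have continuous endpoints, so Lemma \ref{lem: Da17a} applies to each and yields
\[
\inf_X \dot w_0 = \inf_X (u-v), \qquad \inf_X \dot{F(w_0)} = \inf_X (F(u)-F(v)).
\]
Since $w_0=v\in\mathcal H_\omega$, Proposition \ref{prop: geod_tangent_a=1} applies and gives $\dot{F(w_0)}=\dot w_0\circ G_v$. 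As $G_v:X\to X$ is a diffeomorphism, hence a bijection, taking the infimum over $X$ is insensitive to precomposition with $G_v$, so $\inf_X(\dot w_0\circ G_v)=\inf_X\dot w_0$. Chaining the three relations gives $\inf_X(F(u)-F(v))=\inf_X(u-v)$ whenever $v\in\mathcal H_\omega$.

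The one genuine obstacle is that Proposition \ref{prop: geod_tangent_a=1} requires a smooth base point, so this chain does not apply directly when $v$ is merely continuous. I would remove the restriction by approximation. By \cite{BK07} together with Dini's lemma there exist $v_k\in\mathcal H_\omega$ with $v_k\searrow v$ uniformly. Applying the smooth-base-point identity to each pair $(v_k,u)$ gives $\inf_X(F(u)-F(v_k))=\inf_X(u-v_k)$. Now pass to the limit: since $\inf_X$ is $1$-Lipschitz with respect to the sup-norm, uniform convergence $v_k\to v$ forces $\inf_X(u-v_k)\to\inf_X(u-v)$, while Proposition \ref{prop: C_stable} guarantees $\sup_X|F(v_k)-F(v)|\to 0$, whence $\inf_X(F(u)-F(v_k))\to\inf_X(F(u)-F(v))$. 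Taking $k\to\infty$ then yields \eqref{eq: inf_isometry_id} in full generality. The only points demanding care are that each auxiliary geodesic has continuous endpoints, so that Lemma \ref{lem: Da17a} is legitimately invoked, and that the two limits are genuinely uniform rather than merely $d_2$-convergent; both are supplied by Proposition \ref{prop: C_stable}.
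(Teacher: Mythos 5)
Your proof is correct and follows essentially the same route as the paper's: the identity is first established in a base case by combining Lemma \ref{lem: Da17a} with Proposition \ref{prop: geod_tangent_a=1} (noting that precomposition with the diffeomorphism $G_v$ preserves infima), and then extended by uniform approximation via \cite{BK07} together with the uniform-convergence statement of Proposition \ref{prop: C_stable}. The only difference is cosmetic: you take the base case to be $v \in \mathcal H_\omega$, $u \in \mathcal C_\omega$ (using that Proposition \ref{prop: geod_tangent_a=1} permits a merely continuous endpoint), so a single approximating sequence suffices, whereas the paper starts from $u,v \in \mathcal H_\omega$ and approximates both potentials.
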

By the switching the role of $u$ and $v$, we obtain that the above identity holds for the suprema as well.
\begin{proof} That $F(u) ,F(v) \in \mathcal C_\omega$, follows from Proposition \ref{prop: C_stable}. First we deal with the case when $u,v \in \mathcal H_\o$. If $[0,1] \ni t \to h_t \in \mathcal H_\o$ is the $C^{1,1}$-geodesic connecting $h_0:=u$ and $h_1:=v$, then Lemma \ref{lem: Da17a} gives that 
$$\inf_X (v-u) = \inf_X \dot h_0 \ \ \textup{ and } \ \ \inf_X (F(v)-F(u)) = \inf_X \dot{F(h_0)}.$$
Putting this together with \eqref{eq: geod_tanget_eq_a=1}, we obtain that $\inf_X (v-u) = \inf_X (F(v)-F(u))$, as desired.

When $u,v \in \mathcal C_\omega$, by \cite{BK07} one can find $u^k,v^k \in \mathcal H_\omega$ such that $\sup_X |u^k-u| \to 0$ and $\sup_X |v^k-v| \to 0$. Then Proposition \ref{prop: C_stable} implies that $\sup_X |F(u^k)-F(u)| \to 0$ and $\sup_X |F(v^k)-F(v)| \to 0$. 

By uniform convergence we have $\inf_X (u^k - v^k) \to \inf_X (u - v)$ and $\inf_X (F(u^k) - F(v^k)) \to \inf_X (F(u) - F(v))$. The conclusion follows after taking the $k$-limit of $\inf_X (u^k - v^k) = \inf_X (F(u^k) - F(v^k))$.  
\end{proof}

To continue, we need an an auxiliary construction. Fixing $x \in X$ and a small enough coordinate neighborhood $O_x \subset X$, we can find a function $\rho_x \in C^\infty(X)$ such that $\rho_x(y) = e^{\frac{-1}{\|y-x\|^2}}$ for all $y \in O_x$, and there exists $\beta>0$ such that $\beta \leq \rho_x(y) \leq 1$ for all $y \in X \setminus O_x$. 

\begin{proposition}\label{prop: delta_x_subgeod} For $u \in \mathcal H_\o$ and $x \in X$ there exists $\delta >0$ such that $[0,1] \ni t \to u_t := u + \delta (t + \frac{t^2}{2}) \rho_x \in \mathcal H_\o$ is a subgeodesic.
\end{proposition}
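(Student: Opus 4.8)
The plan is to check the one nontrivial requirement in the definition of a subgeodesic: that the function $U(s,y):=u_{\operatorname{Re} s}(y)$ is $\pi^*\omega$-plurisubharmonic on $S\times X$, where $\pi\colon S\times X\to X$ is the projection and $S=\{0<\operatorname{Re} s<1\}$. The two limit conditions at the ends are automatic, since with $h(t):=t+\tfrac{t^2}{2}$ the curve $u_t=u+\delta h(t)\rho_x$ depends continuously on $t\in[0,1]$ and has $u_0=u$, $u_1=u+\tfrac32\delta\rho_x$ as its own endpoints, so the required limits hold with equality. Thus everything reduces to producing $\delta>0$ with $\pi^*\omega+i\ddbar U\ge 0$ on $S\times X$.

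First I would compute the complex Hessian of $U$ in all variables, splitting them into the strip variable $s$ and the manifold variables $y=(y_1,\dots,y_n)$. Writing $\tau=\operatorname{Re} s$, so that $\del_s\tau=\del_{\bar s}\tau=\tfrac12$, a direct computation gives the Hermitian block matrix
\[
\pi^*\omega+i\ddbar U=
\begin{pmatrix}
\dfrac{\delta}{4}h''(\tau)\,\rho_x & \dfrac{\delta}{2}h'(\tau)\,\dbar\rho_x\\[8pt]
\dfrac{\delta}{2}h'(\tau)\,\del\rho_x & \omega_u+\delta h(\tau)\,i\ddbar_y\rho_x
\end{pmatrix},
\]
whose $s\bar s$-entry is the scalar $\tfrac{\delta}{4}h''(\tau)\rho_x$, whose mixed entries are $\tfrac{\delta}{2}h'(\tau)$ times the first derivatives of $\rho_x$, and whose $y\bar y$-block is $C:=\omega_u+\delta h(\tau)\,i\ddbar_y\rho_x$.

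Next I would control the two diagonal blocks. Since $u\in\mathcal H_\omega$ and $X$ is compact, $\omega_u\ge c_0\,\mathrm{Id}$ for some $c_0>0$; as $0\le h(\tau)\le\tfrac32$ on $[0,1]$ and $\rho_x\in C^\infty(X)$ makes $i\ddbar_y\rho_x$ bounded, for $\delta$ small the block satisfies $C\ge\tfrac{c_0}{2}\,\mathrm{Id}>0$. With $C>0$ strictly, positivity of the whole matrix is equivalent, by the Schur complement, to the scalar inequality
\[
\frac{\delta}{4}h''(\tau)\,\rho_x\ \ge\ \frac{\delta^2}{4}h'(\tau)^2\,(\del\rho_x)^* C^{-1}(\del\rho_x).
\]
Using $C^{-1}\le\tfrac{2}{c_0}\,\mathrm{Id}$, together with $h''\equiv1$ and $h'(\tau)^2\le4$, it suffices to arrange $\rho_x\ge\tfrac{8\delta}{c_0}\,|\del\rho_x|^2$ on $X$, where $|\del\rho_x|^2$ denotes the squared gradient norm.

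The main obstacle, and the whole point of the auxiliary construction, is this last estimate, which would fail for a generic nonnegative bump but holds for the specific profile of $\rho_x$. Indeed, on the patch $O_x$ where $\rho_x(y)=e^{-1/\|y-x\|^2}$ one computes $|\del\rho_x|^2/\rho_x\to 0$ as $y\to x$ (the exponential beats the polynomial blow-up $\|y-x\|^{-6}$), while on $X\setminus O_x$ one has $\rho_x\ge\beta>0$ with $\del\rho_x$ bounded, so $|\del\rho_x|^2/\rho_x$ is bounded there as well. Hence $M_0:=\sup_X|\del\rho_x|^2/\rho_x<\infty$, and choosing $\delta$ small enough that both $C\ge\tfrac{c_0}{2}\mathrm{Id}$ and $\tfrac{8\delta M_0}{c_0}\le1$ makes the displayed inequality hold for every $\tau\in[0,1]$. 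This yields $\pi^*\omega+i\ddbar U\ge0$, so $t\mapsto u_t$ is a subgeodesic, as claimed. I would stress that the strictly convex factor $h''=1>0$ is essential: it supplies the positive $s\bar s$-term that absorbs the off-diagonal gradient contribution, which is exactly why $u_t$ is a strict subgeodesic rather than a genuine geodesic.
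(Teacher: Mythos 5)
Your proof is correct and takes essentially the same route as the paper: your Schur-complement inequality is exactly the paper's scalar condition $\ddot{u}_t - \langle \partial \dot u_t, \bar \partial \dot u_t\rangle_{\omega_{u_t}} \geq 0$ (which the paper derives by counting nonnegative eigenvalues and then requiring the determinant to be nonnegative), and both arguments close with the identical key estimate that $\langle \partial \rho_x, \bar \partial \rho_x \rangle_{\omega}/\rho_x$ is uniformly bounded because the exponential profile of $\rho_x$ dominates the polynomial blow-up $\|y-x\|^{-6}$. The remaining differences (bounding $C^{-1}$ by $\tfrac{2}{c_0}\,\mathrm{Id}$ versus using $\omega_{u_t}\geq \alpha\omega$, and treating $X\setminus O_x$ explicitly rather than implicitly) are cosmetic.
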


\begin{proof} Let $U(s,y) = u_{\textup{Re }s}(y) \in C^\infty(S \times X)$, where $S = \{ 0 \leq \textup{Re }z \leq 1
\} \subset \Bbb C$. It is clear that for small enough $\delta >0$ we have that $u_t \in \mathcal H_\omega, \ t \in [0,1]$. More precisely, there exists $\alpha>0$ such that $\omega_{u_t} \geq \alpha \omega, \ t \in [0,1]$.

This implies that $\omega + i \partial_{S \times X} \bar \partial_{S \times X} U$ has at least $n$ non-negative eigenvalues for all $(s,y) \in S \times X$. To conclude that $\omega + i \partial_{S \times X} \bar \partial_{S \times X} U \geq 0$ it is enough to show that the determinant of this Hermitian form is non-negative. This is equivalent with
$\ddot{u}_t - \langle \partial \dot u_t, \bar \partial \dot u_t\rangle_{\omega_{u_t}} \geq 0$ on $[0,1] \times X.$
To show this, we start the following sequence of estimates:
\begin{flalign*}
\ddot{u}_t - \langle \partial \dot u_t, \bar \partial \dot u_t\rangle_{\omega_{u_t}} & = \delta \rho_x -  \delta^2(1+t)^2 \langle \partial \rho_x, \bar \partial \rho_x \rangle_{\omega_{u_t}} \geq \delta \rho_x -  \frac{\delta^2(1+t)^2}{\alpha} \langle \partial \rho_x, \bar \partial \rho_x \rangle_{\omega}.
\end{flalign*}
After possibly shrinking $\delta \in (0,1)$, we obtain that it is enough to conclude that the last expression is non-negative on the neighborhood $O_x$, where know that $\rho_x(y) = e^{\frac{-1}{\|y-x\|^2}}, \ y \in O_x$. In particular, on $O_x \setminus \{x\}$ we have that $\langle \partial \rho_x, \bar \partial \rho_x \rangle_{\omega}/ \rho_x \simeq e^{\frac{-1}{\|y-x\|^2}}\frac{1}{\|y-x\|^6}$, which is uniformly bounded. In particular, after possibly further shrinking $\delta \in (0,1)$  we obtain that 
\begin{flalign*}
\ddot{u}_t - \langle \partial \dot u_t, \bar \partial \dot u_t\rangle_{\omega_{u_t}} &  \geq \delta \rho_x -  \frac{\delta^2(1+t)^2}{\alpha} \langle \partial \rho_x, \bar \partial \rho_x \rangle_{\omega} \geq 0,
\end{flalign*}
what we desired to prove.
\end{proof}

\begin{theorem}\label{thm: no_smooth_symmetry_a=1} Suppose that $F:\mathcal H_\o \to \mathcal H_\o$ is a differentiable $L^2$ isometry with $a=1$. Then exactly one of the following holds:\\
\noindent (i) $F$ is induced by a biholomorphism or anti-biholomorphism $f:X \to X$ that preserves or flips the K\"ahler class $[\omega]$, respectively.\\
\noindent (ii)  $F \circ \mathcal I$ is induced by a biholomorphism or anti-biholomorphism $f:X \to X$ that preserves or flips the K\"ahler class $[\omega]$, respectively.
\end{theorem}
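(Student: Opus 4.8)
The plan is to reduce to the normalized case $a=1,\ b=0$ and then show that such an isometry is induced by a single (anti-)biholomorphism. By Lemma \ref{lem: b_flip} the $a$-parameter is unchanged under composition with $\mathcal I$, while the $b$-parameter toggles between $0$ and $2a=2$. Since Theorem \ref{thm: Lempert_isom} forces $b\in\{0,2\}$ when $a=1$, either $F$ already has $b=0$ or $F\circ\mathcal I$ does. I would treat whichever of $F,\ F\circ\mathcal I$ has $b=0$, prove it is induced by an (anti-)biholomorphism, and then read off the dichotomy: the map with $b=0$ lands in the appropriate case, and the two cases are mutually exclusive because maps induced by (anti-)biholomorphisms have $b=0$ (as computed for $L_f,N_g$ in Section 2.3) while their $\mathcal I$-twist has $b=2$. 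So from now on assume $a=1,\ b=0$, and write $G_u$ for the diffeomorphisms of Theorem \ref{thm: Lempert_isom}, so that $F_*(u)\xi=\xi\circ G_u$, $G_u^*\omega_u=\pm\omega_{F(u)}$, and (Corollary \ref{cor: G_constant}) $G_{u+c}=G_u$. The goal becomes: $G_u$ is independent of $u$ and holomorphic (the $+$ sign) or anti-holomorphic (the $-$ sign).

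The engine is an infinitesimal rigidity statement for $u\mapsto G_u$. Fix $v\in\mathcal H_\o$, a point $x\in X$, and a perturbation $\eta\ge 0$ vanishing only at $x$; by Proposition \ref{prop: delta_x_subgeod} such $\eta$ (e.g. multiples of $\rho_x$) are plentiful, and I may take $\eta$ small so that $v+t\eta\in\mathcal H_\o$ for $t\in[0,1]$. Integrating the differential formula along the segment gives
\begin{equation*}
F(v+\eta)-F(v)=\int_0^1 \eta\circ G_{v+t\eta}\,dt\ge 0 .
\end{equation*}
On the other hand the inf-preservation identity \eqref{eq: inf_isometry_id} gives $\inf_X\big(F(v+\eta)-F(v)\big)=\inf_X\eta=0$, attained at some $p_0\in X$. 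Since the integrand is nonnegative and continuous in $t$, this forces $\eta\big(G_{v+t\eta}(p_0)\big)=0$, hence $G_{v+t\eta}(p_0)=x$, for every $t\in[0,1]$. Thus the preimage $G_{v+t\eta}^{-1}(x)$ does not move along the segment; differentiating at $t=0$ yields $\dot G_v[\eta]\big(G_v^{-1}(x)\big)=0$ for all such $\eta$. A short approximation argument shows that these $\eta$ span all smooth functions whose value and differential vanish at $x$, so at $p_0=G_v^{-1}(x)$ the variation $\dot G_v[\eta](p_0)$ depends only on the $1$-jet of $\eta$ at $x=G_v(p_0)$.

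With this localization I would extract constancy and holomorphicity. Granting differentiability of $u\mapsto G_u$, equality of mixed second derivatives of $F$ reads $d\xi_{x}\big(\dot G_v[\eta](p_0)\big)=d\eta_{x}\big(\dot G_v[\xi](p_0)\big)$; feeding in the $1$-jet description kills the zeroth-order term and forces $\dot G_v[\eta](p_0)=B_{p_0}(d\eta_x)$ for a symmetric $2$-tensor $B_{p_0}$ at $x$. The main obstacle is to conclude $B\equiv 0$, i.e. that $u\mapsto G_u$ is constant: I would differentiate the structural identity $G_u^*\omega_u=\pm\omega_{F(u)}$ in $u$ and combine it with the symmetry of $B$ and the rigidity of this relation to show the variation vanishes, so that $G_u\equiv G$ on the connected space $\mathcal H_\o$. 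Once $G$ is $u$-independent, holomorphicity is linear algebra: prescribing the $2$-jet of $u$ at $G(p)$ makes $\omega_u$ an arbitrary positive $(1,1)$-form there, while $G^*\omega_u=\pm\omega_{F(u)}$ stays of type $(1,1)$; the pullback of every positive $(1,1)$-form remaining $(1,1)$ forces $dG_p$ to be complex-linear ($+$) or complex-antilinear ($-$) for $n\ge 2$, with the case $n=1$ handled separately via compatibility of $G$ with the Monge--Amp\`ere geodesic equation.

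Finally I would identify $F$. Holomorphicity gives $G^*[\omega]=[\omega]$ in the $+$ case and $G^*[\omega]=-[\omega]$ in the $-$ case, so $G$ is a biholomorphism preserving, or an anti-biholomorphism flipping, $[\omega]$. Comparing differentials, $F_*(u)=(L_G)_*(u)$ (resp. $(N_G)_*(u)$) from Section 2.3, so $F$ and $L_G$ (resp. $N_G$) differ by a constant along every curve, hence everywhere; a normalization check removes the constant and yields $F=L_G$ or $F=N_G$. Tracing back through the $\mathcal I$-reduction then gives exactly one of (i), (ii). I expect the genuine difficulty to be concentrated in the constancy step $B\equiv 0$: the local rigidity above pins down the $1$-jet dependence cheaply, but ruling out a nontrivial symmetric-tensor variation requires the full interplay between the isometry's differential, the identity $G_u^*\omega_u=\pm\omega_{F(u)}$, and connectedness of $\mathcal H_\o$.
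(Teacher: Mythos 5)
Your reduction to $a=1,b=0$ via Lemma \ref{lem: b_flip}, and your ray observation, are both correct and in the spirit of the paper: the identity $F(v+\eta)-F(v)=\int_0^1 \eta\circ G_{v+t\eta}\,dt$ combined with \eqref{eq: inf_isometry_id} does force $G_{v+t\eta}^{-1}(x)=p_0$ for all $t$, for any $\eta\ge 0$ vanishing only at $x$. But from that point on there are two genuine gaps, and they sit exactly where the real content of the theorem lies. First, everything after ``differentiating at $t=0$'' presupposes that $u\mapsto G_u$ is differentiable, and the mixed-partials symmetry you invoke presupposes that $F$ is twice differentiable. Neither is available: $F$ is only assumed to be a \emph{differentiable} ($C^1$) isometry, Theorem \ref{thm: Lempert_isom} and the Remark following it give only \emph{continuity} of $u\mapsto G_u$, and the paper explicitly notes (citing \cite{Le17a}) that it is not known whether differentiable $L^2$ isometries are smooth. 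So the objects $\dot G_v[\eta]$ and the tensor $B_{p_0}$ are not defined in this setting. Second, even granting that regularity, you yourself flag that the decisive step $B\equiv 0$ is not proved; you describe a strategy (``differentiate $G_u^*\omega_u=\pm\omega_{F(u)}$ in $u$ and use rigidity'') but do not carry it out. Since constancy of $G_u$ is precisely the heart of the theorem, the proposal as it stands does not prove the statement.

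It is worth seeing how the paper avoids both problems: it never differentiates in $u$. Instead of an infinitesimal bootstrap from special rays, it compares two \emph{arbitrary} potentials $u,v$ (normalized so $u(x)=v(x)$, with $\nabla u(x)\neq\nabla v(x)$) directly, by building $w=\max(u,v)+\eta\rho_x\in\mathcal C_\omega$, which touches each of $u,v$ only at $x$. Proposition \ref{prop: geod_tangent_a=1} (the geodesic analogue of your integral identity), the subgeodesic of Proposition \ref{prop: delta_x_subgeod} and \eqref{eq: inf_isometry_id} then show that $F(w)$ touches $F(u)$ only at $G_u^{-1}(x)$ and $F(v)$ only at $G_v^{-1}(x)$; if these two points were distinct, a Richberg regularized maximum $\beta$ squeezed between $F(w)$ and $\max(F(u),F(v))$ could be pulled back through the \emph{surjective} map $F$ to a smooth $\alpha$ with $w\ge\alpha\ge\max(u,v)$, contradicting smoothness of $\alpha$ at the corner $x$. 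The degenerate case $\nabla u(x)=\nabla v(x)$ is then handled by perturbing through a third potential. This finite, global argument is what replaces your missing $B\equiv 0$ step; your remaining steps (integration along $t\mapsto tu$ to get $F(u)=F(0)+u\circ g$, and the jet/type argument for (anti-)holomorphicity of $g$) do match the paper's endgame, though note the paper's final lemma uses the full identity $i\ddbar(u\circ g)=\pm g^*(i\ddbar u)$ rather than mere type preservation, so no separate treatment of $n=1$ is needed.
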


\begin{proof} Due to Lemma \ref{lem: b_flip}, after possibly composing $F$ with $\mathcal I$, we only need to worry about the case $a=1$ and $b=0$. In this case we will show that $F$ is induced by a biholomorphism or anti-biholomorphism $g:X \to X$ that preserves or flips the K\"ahler class $[\omega]$.

In the language of Theorem \ref{thm: Lempert_isom} applied to $F$, the first step is to show that $G_u = G_v$ for all $u,v \in \mathcal H_\omega$.
 
We fix $x \in X$ and $u,v \in \mathcal H_\omega$. We will show that $G_u^{-1}(x)=G_v^{-1}(x)$. Since $G_{u+c} = G_{u}$   for any $c \in \Bbb R$ (Corollary \ref{cor: G_constant}), we can assume that $u(x)=v(x)$.  First we prove that $G_u^{-1}(x)=G_v^{-1}(x)$ under the extra non-degeneracy condition $\nabla u(x) \neq \nabla v(x)$.

Let $\eta >0$ be such that $w:=\max(u,v) + \eta \rho_x \in \mathcal C_\omega$. From our setup it is clear that $w \geq \max(u,v)$, and the graphs of  $w$, $u$ and $v$ only meet at $x$. Extending the isometry $F$ to the metric completion, Proposition \ref{prop: monotonicity} and Proposition \ref{prop: C_stable} implies that $F(w) \geq \max(F(u),F(v)), \ F(w)\in \mathcal C_\omega$ and $F(u),F(v) \in \mathcal H_\o$. Below we will show that $F(w)$ and $F(u)$ only meet at $G^{-1}_u(x)$, moreover $F(w)$ and $F(v)$ only meet at $G^{-1}_v(x)$. Finally, we will show that the graphs of $F(w)$, $F(u)$ and $F(v)$ have to meet at some point of $X$, implying that  $G^{-1}_u(x)=G^{-1}_v(x)$,  as desired.

Let us denote by $[0,1] \ni t \to u_t,v_t \in \mathcal E^2_\o$ the $d_2$-geodesics joining $u_0:=u$ with $u_1 := w$, respectively $v_0:=v$ with $v_1 := w$. From Proposition \ref{prop: geod_tangent_a=1} it follows that 
\begin{equation}\label{eq: F_dot_eq}
\dot{F(u_0)} = \dot u_0\circ G_u, \ \ \ \dot{F(v_0)} = \dot v_0\circ G_v.
\end{equation}

Using \eqref{eq: geod_env_def} there exists  a small enough $\delta >0$ in the statement of Proposition \ref{prop: delta_x_subgeod} such that $u + \delta (t + \frac{t^2}{2})\rho_x \leq u_t$ and $v + \delta (t + \frac{t^2}{2})\rho_x \leq v_t, \ t \in [0,1]$. Using this, $t$-convexity and \eqref{eq: F_dot_eq}, we obtain that 
$$
F(w) - F(u) \geq \dot{F(u_0)} = \dot u_0\circ G_u  \geq  \delta \rho_x\circ G_u, \ \ \ F(w) - F(v) \geq\dot{F(v_0)} = \dot v_0\circ G_v \geq \delta \rho_x\circ G_v.
$$
Due to \eqref{eq: inf_isometry_id} these two estimates imply the existence of a unique $y \in X$ and a unique $z \in X$ such that
\begin{equation}
F(w)(y) - F(u)(y)=0  \ \ \textup{ and } \ \ F(w)(z) - F(v)(z)=0.
\end{equation}
In fact, we need to have that $y = G^{-1}_u(x)$ and $z = G^{-1}_v(x)$. In particular, the graphs of $F(w)$ and $F(u)$ only meet at $y$, and graphs of $F(w)$ and $F(u)$ only meet at $z$.

In case $y \neq z$, uniqueness of $y$ and $z$ implies that $y \in \{F(u) > F(v)\}$ and  $y \in \{F(v) > F(u)\}$ (recall that $F(w) \geq \max(F(u),F(v)$). This implies that the graphs of $F(w)$ and $\max(F(u),F(v))$ meet at only two points ($y$ and $z$), away from the compact set $\{F(u)=F(v)\}$. Consequently, using classical Richberg approximation \cite[Chapter I, Lemma 5.18]{Dem12}, one can take a ``regularized maximum" of $F(u)$ and $F(v)$ to obtain $\beta \in \mathcal H_\omega$ satisfying
$$F(w)  \geq \beta \geq \max(F(u),F(v)).$$
Since $F:\mathcal H_\omega \to \mathcal H_\omega$ is surjective, there exists a unique $\alpha \in \mathcal H_\o$ s.t. $F(\alpha)=\beta$. Using \eqref{eq: inf_isometry_id} again, we obtain that 
$$\max(u,v) + \delta \rho_x  =w\geq \alpha \geq \max(u,v).$$
Since $\nabla u(x) \neq \nabla v(x)$ and $w(x)=\alpha(x)=\max(u,v)(x)$, this is a contradiction with the smoothness of $\alpha$ at $x$. Consequently, we need to have that $G^{-1}_u(x)=y = z =G^{-1}_v(x)$, as desired.

In case $\nabla u(x) = \nabla v(x)$, one finds $q \in \mathcal H_\omega$ (via small perturbation) such that $u(x)=v(x)=q(x)$ and $\nabla u(x) \neq  \nabla q(x)$ along with $\nabla v(x) \neq  \nabla q(x)$. Then by the above we have that $G^{-1}_u(x)=G^{-1}_q(x)$ and $G^{-1}_v(x)=G^{-1}_q(x)$, ultimately giving that $G^{-1}_u(x)=G^{-1}_v(x)$ for any $u,v \in \mathcal H_\o$.

Using Theorem \ref{thm: Lempert_isom}, an integration along the curve $t \to tu$ gives that
\begin{equation}\label{eq: F_formula}
F(u)-F(0) = \int_0^1 (u \circ g) dt = u \circ g, \ \ \ u \in \mathcal H_\omega.
\end{equation}
Returning to the statement of Theorem \ref{thm: Lempert_isom}, we either have $g^* \omega_u = \omega_{F(u)}, \ u \in \mathcal H_\omega$, or $g^* \omega_u = -\omega_{F(u)}, \ u \in \mathcal H_\omega$.

Assuming that $g^* \omega_u = \omega_{F(u)}$, using \eqref{eq: F_formula} we arrive at the identity $g^* (i\ddbar u) = i \ddbar (u \circ g)$. Since after a dilation all elements of $C^\infty(X)$ land in $\mathcal H_\o$, we obtain that actually $g^* (i\ddbar v) = i \ddbar (v \circ g)$ for all $v \in C^\infty(X)$. According to the next lemma $g$ has to be holomorphic, implying that $F=L_g$ (see Section 2.3).

In case $g^* \omega_u = -\omega_{F(u)}$, by a similar calculation we arrive at $g^* (i\ddbar v) = -i \ddbar (v \circ g)$ for all $v \in C^\infty(X)$.  According to the next lemma $g$ has to be anti-holomorphic, giving that $F=N_g$ (see Section 2.3), finishing the proof.
\end{proof}

\begin{lemma} Suppose that $g: X \to X$ is a smooth map.\\
\noindent (i) If $i\ddbar (u \circ g) = g^* (i\ddbar u)$ for all $u \in C^\infty(X)$ then $g$ is holomorphic.\\
\noindent (ii) If $i\ddbar (u \circ g) = -g^* (i\ddbar u)$ for all $u \in C^\infty(X)$ then $g$ is anti-holomorphic.
\end{lemma}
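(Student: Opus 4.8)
The statement is local and depends only on the $1$-jet of $g$, so I would fix a point $p\in X$, set $q=g(p)$, and choose holomorphic coordinates $z=(z_1,\dots,z_n)$ centered at $p$ and $w=(w_1,\dots,w_n)$ centered at $q$. Writing $g$ as $w=g(z)$, introduce the Wirtinger derivatives $a_{j\alpha}:=\frac{\del g_j}{\del z_\alpha}(p)$ and $b_{j\alpha}:=\frac{\del g_j}{\del \bar z_\alpha}(p)$. By definition $g$ is holomorphic near $p$ exactly when $b_{j\alpha}=0$ for all $j,\alpha$, and anti-holomorphic exactly when $a_{j\alpha}=0$ for all $j,\alpha$. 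Hence in both cases it suffices to produce a pointwise algebraic identity forcing the vanishing of the relevant matrix; since $p$ is arbitrary this yields the global conclusion.

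The plan is to feed carefully chosen test functions into the hypothesis. Both $i\ddbar(u\circ g)|_p$ and $g^*(i\ddbar u)|_p$ depend only on finitely many derivatives of $u$ at $q$, so I may prescribe the $2$-jet of $u$ at $q$ freely by a globally defined $u\in C^\infty(X)$. Concretely, for an arbitrary Hermitian matrix $H=(H_{jk})$ I take $u$ with $\frac{\del u}{\del w_j}(q)=\frac{\del u}{\del\bar w_j}(q)=0$, with vanishing holomorphic and anti-holomorphic second derivatives $\frac{\del^2 u}{\del w_j\del w_k}(q)=\frac{\del^2 u}{\del\bar w_j\del\bar w_k}(q)=0$, and with $\frac{\del^2 u}{\del w_j\del\bar w_k}(q)=H_{jk}$. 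The vanishing first derivatives are precisely what kill the second-derivative-of-$g$ terms produced by the chain rule, reducing the identity to a quadratic form in $a$ and $b$.

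With this choice a direct computation gives, for the coefficient of $i\,dz_\alpha\w d\bar z_\beta$,
\beq
i\ddbar(u\circ g)|_p:\quad \sum_{j,k}H_{jk}a_{j\alpha}\overline{a_{k\beta}}+\sum_{j,k}H_{jk}b_{j\beta}\overline{b_{k\alpha}},
\eeq
while pulling back $i\ddbar u=i\sum_{j,k}H_{jk}\,dw_j\w d\bar w_k$ and retaining only the $(1,1)$-part (the $(2,0)$ and $(0,2)$ parts of the pullback must vanish separately, but are not needed for the argument) gives
\beq
g^*(i\ddbar u)|_p:\quad \sum_{j,k}H_{jk}a_{j\alpha}\overline{a_{k\beta}}-\sum_{j,k}H_{jk}b_{j\beta}\overline{b_{k\alpha}}.
\eeq
The decisive feature is the opposite sign of the $b$-term, which arises because pulling back $d\bar w_k$ along the anti-holomorphic differential reverses the wedge ordering relative to the Hessian term. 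Subtracting, hypothesis (i) forces $\sum_{j,k}H_{jk}b_{j\beta}\overline{b_{k\alpha}}=0$ for every Hermitian $H$ and all $\alpha,\beta$; taking $H=\mathrm{Id}$ and $\alpha=\beta$ yields $\sum_j|b_{j\alpha}|^2=0$, hence $b_{j\alpha}=0$, so $g$ is holomorphic. Symmetrically, hypothesis (ii) forces $\sum_{j,k}H_{jk}a_{j\alpha}\overline{a_{k\beta}}=0$, whence $a_{j\alpha}=0$ and $g$ is anti-holomorphic.

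The main obstacle is bookkeeping rather than conceptual: one must carry the Wirtinger derivatives and their conjugates through both the chain rule and the pullback without sign errors, and in particular verify that the $b$-contributions enter with opposite signs on the two sides, since it is exactly this asymmetry that lets hypothesis (i) select $b=0$ and hypothesis (ii) select $a=0$. Imposing the vanishing first-order jet of $u$ at $q$ is the key simplification that removes all dependence on the second derivatives of $g$ and collapses the identity to the clean quadratic comparison above.
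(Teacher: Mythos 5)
Your proof is correct, and the computations check out: with your normalization of the $2$-jet of $u$ at $q=g(p)$, the coefficient of $i\,dz_\alpha\wedge d\bar z_\beta$ in $i\partial\bar\partial(u\circ g)$ is indeed $\sum_{j,k}H_{jk}\big(a_{j\alpha}\overline{a_{k\beta}}+b_{j\beta}\overline{b_{k\alpha}}\big)$, while the $(1,1)$-part of $g^*(i\partial\bar\partial u)$ carries the $b$-term with the opposite sign, so taking $H=\mathrm{Id}$ and $\alpha=\beta$ forces $b=0$ in case (i) and $a=0$ in case (ii). The paper's proof exploits exactly the same sign asymmetry, but reaches it in stages with several families of test functions: first $u=z_b$ and $u=\bar z_b$ (after extending the hypothesis to complex-valued $u$ by linearity) to show that the terms involving $\partial^2 g_b/\partial z_j\partial\bar z_k$ vanish identically, then $u=z_a z_b$ and $u=\bar z_a\bar z_b$ to kill the pure-Hessian contributions, and only then a strictly plurisubharmonic $u$ --- precisely your case $H=\mathrm{Id}$ --- to conclude $\partial g_a/\partial\bar z_j=0$. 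Your single choice of test function (vanishing $1$-jet, vanishing pure Hessians, arbitrary Hermitian mixed Hessian, realized globally by cutting off $\sum_{j,k}H_{jk}w_j\bar w_k$ near $q$) removes all unwanted terms in one stroke rather than in stages, and it keeps the test functions real-valued, so the complexification-by-linearity step becomes unnecessary. What the paper's staged argument yields in addition is the intermediate pointwise fact $\partial^2 g_b/\partial z_j\partial\bar z_k\equiv 0$, which is not needed for the final conclusion; so your route is a genuine, if modest, streamlining of the same underlying local-coordinate computation.
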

\begin{proof} We only show (i) as the proof of (ii) is analogous. We start with the following  computations expressed in local coordinates:
\begin{flalign}\label{eq: comp_ddbar}
i \ddbar (u\circ g)  =& i \frac{\partial^2 (u \circ g)}{\partial z_j \partial \overline{z_k}} dz_j \wedge d \overline{z_k} \nonumber \\
=& i\frac{\partial^2 u}{\partial z_a \partial \overline{z_b}} \bigg[\frac{\partial g_a}{\partial z_j}\frac{\partial \overline{g_b}}{\partial \overline{z_k}} + \frac{\partial g_a}{\partial \overline{ z_k}} \frac{\partial \overline{ g_b}}{\partial z_j}\bigg]dz_j \wedge d \overline{z_k}  \nonumber\\
&+ i\frac{\partial^2 u}{\partial z_a \partial {z_b}} \frac{\partial g_a}{\partial z_j}\frac{\partial {g_b}}{\partial \overline{z_k}} dz_j \wedge d \overline{z_k} +  i\frac{\partial^2 u}{\partial \overline{z_a} \partial \overline{z_b}} \frac{\partial \overline{g_a}}{\partial z_j}\frac{\partial \overline{g_b}}{\partial \overline{z_k}} dz_j \wedge d \overline{z_k}\\
&+ i \frac{\partial u}{\partial z_b} \frac{\partial^2 g_b}{\partial z_j \overline{\partial z_k}} dz_j \wedge d \overline{z_k} + i \frac{\partial u}{\partial \overline{z_b}} \frac{\partial^2 \overline{g_b}}{\partial z_j \overline{\partial z_k}} dz_j \wedge d \overline{z_k}.\nonumber
\end{flalign}
Knowing that $g^* (i\ddbar u)$ is a $(1,1)$ form we also have that
\begin{flalign}\label{eq: pullback_ddbar}
g^* (i\ddbar u) & = i\frac{\partial^2 u}{\partial z_a \partial \overline{ z_b}} \bigg[\frac{\partial g_a}{\partial z_j}\frac{\partial \overline{g_b}}{\partial \overline{z_k}} -  \frac{\partial g_a}{\partial \overline{z_k}}\frac{\partial \overline{g_b}}{\partial z_j}\bigg]dz_j \wedge d \overline{z_k}.
\end{flalign}
Clearly, it is enough to show that $g$ is holomorphic in local coordinate charts. By linearity we can assume that $i\ddbar (u \circ g) = g^* (i\ddbar u)$ holds for complex valued smooth functions $u$. 

Let $x \in X$, and we pick $u$ such that in a coordinate neighborhood of $x$  we have that $u(z) = z_b, \ b \in \{1,\ldots, n\}$. Then $i\ddbar (u \circ g) = g^* (i\ddbar u)$ gives that ${\partial^2 g_b}/{\partial z_j \partial \overline{z_k}}=0$ for all $j,k \in \{1,\ldots, n\}$ at  $x$. Similarly, after choosing $u(z) = \overline{z_b}, \ b \in \{1,\ldots, n\}$ in a coordinate neighborhood of $x$, we obtain that ${\partial^2 \overline{g_b}}/{\partial z_j \partial \overline{z_k}}=0$ for all $j,k \in \{1,\ldots, n\}$ at  $x$. Since $x \in X$ was arbitrary, the terms in the last line  of \eqref{eq: comp_ddbar} vanish for any choice of $u$. 

Repeating this process for $u(z) = z_a  z_b$ and $u(z)=\bar z_a  \bar z_b$, we conclude that the terms in the second line of \eqref{eq: comp_ddbar} vanish as well, for any choice of $u$.

Revisiting the identity $i\ddbar (u \circ g) = g^* (i\ddbar u)$ one more time, after picking $u$ such that $i\ddbar u$ is positive definite in a neighborhood of $x\in X$, we obtain that $\partial  g_a/\partial \overline{z_j} =0$ for any $a,j \in  \{1,\ldots, n\}$ at $X$, implying that $g$ is indeed holomorphic.
\end{proof}
\section{Proof of Theorem \ref{thm: no_metric_symmetry_intr} and \ref{thm: no_geod_ext_intr}}

We start with a lemma about the concatenation of geodesics in $\mathcal E^2_\o$:

\begin{lemma}\label{lem: concat_geod} Suppose that $[-1,0] \ni  t\to v_t \in \mathcal E^2_\o$  and $[0,1] \ni  t\to u_t \in \mathcal E^2_\o$ are $d_2$-geodesics such that $u_0=v_0 \in \mathcal H_\o$ and $\dot u_0=\dot v_0 \in L^2(\omega^n)$. Then $[-1,1] \ni t \to w_t \in \mathcal E^2_\o$, the concatenation of the curves $t \to u_t$ and $t \to v_t$, is the $d_2$-geodesic joining $v_{-1},u_1 \in \mathcal E^2_\o$.
\end{lemma}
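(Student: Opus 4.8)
The plan is to show that the concatenated curve $t\to w_t$ is itself the unique $d_2$-geodesic joining $v_{-1}$ and $u_1$ by verifying that its complexification solves the homogeneous complex \MA equation on the full strip. Set $W(s,x):=w_{\textup{Re }s}(x)$ on $S'\times X$, where $S'=\{-1<\textup{Re }s<1\}$, and recall from \eqref{eq: geod_env_def} that a curve is the $d_2$-geodesic between its endpoints precisely when its associated potential is $\omega$-psh on the strip and maximal, equivalently when it solves $(\omega+\i\ddbar W)^{n+1}=0$ with the prescribed boundary values. Since $t\to u_t$ and $t\to v_t$ are already $d_2$-geodesics, $W$ is $\omega$-psh and satisfies this equation on each of the open half-strips $\{-1<\textup{Re }s<0\}\times X$ and $\{0<\textup{Re }s<1\}\times X$. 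Moreover the two pieces have the same constant speed, since $u_0=v_0$ and $\dot u_0=\dot v_0$ force $\int_X(\dot u_0)^2\omega_{u_0}^n=\int_X(\dot v_0)^2\omega_{v_0}^n$, so $w$ is parametrized with constant speed on all of $[-1,1]$. Everything thus reduces to understanding $W$ across the joining slice $H:=\{\textup{Re }s=0\}\times X$.

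First I would record that $t\to w_t(x)$ is convex on $[-1,1]$ for almost every $x$: it is convex on each half by the geodesic property, and the one-sided derivatives at $t=0$ agree almost everywhere because $\dot v_0=\dot u_0$ in $L^2(\omega^n)$. In particular $w_t\ge w_0+t\dot w_0$, so $W$ is finite a.e.\ near $H$, while $\omega$-plurisubharmonicity on each side gives the automatic upper bound on the compact $X$.

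The main step, and the main obstacle, is to promote $\omega$-plurisubharmonicity from each half-strip to all of $S'\times X$. Since $W$ is $\omega$-psh off the real hypersurface $H$ and locally finite there, the only possible failure is a negative singular part of $\omega+\i\ddbar W$ supported on $H$; such a part is a $\delta_H$-current whose coefficient is governed by the jump of the full gradient $\nabla_{t,x}W$ across $H$. The normal $\partial_t^2$ contribution vanishes because $\partial_t W$ has matching one-sided limits $\dot v_0=\dot u_0$, so the remaining danger is the mixed normal--tangential terms, which vanish exactly when the spatial gradient $\nabla_x w_t$ has matching one-sided limits $\nabla_x w_0$ as $t\to 0^{\pm}$. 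This is where the hypothesis $u_0=v_0\in\mathcal H_\o$ is essential: near a smooth endpoint the geodesics carry enough regularity for $\nabla_x w_t\to\nabla_x w_0$ from both sides, so that $\nabla_{t,x}W$ is continuous across $H$ and no Dirac mass is created. I expect establishing this gradient-matching regularity at the smooth joining slice to be the delicate point. Granting it, $\omega+\i\ddbar W\ge 0$ on $S'\times X$, i.e.\ $w$ is a subgeodesic; in particular $w_t\le z_t$, where $t\to z_t$ is the $d_2$-geodesic from $v_{-1}$ to $u_1$.

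Finally I would upgrade $w\le z$ to equality. Because $H$ carries no mass of $\omega+\i\ddbar W$ (previous step) and the homogeneous \MA equation holds on each open half-strip, it holds across $H$ as well, so $W$ solves the full Dirichlet problem with boundary data $v_{-1}$ and $u_1$. By the characterization of $d_2$-geodesics as the maximal $\omega$-psh solution of this equation (equivalently by \eqref{eq: geod_env_def}), $W$ coincides with the potential of $z$, and the parametrization inherited from $S'$ identifies the two constant-speed curves; hence $w_t=z_t$ on $[-1,1]$, as claimed. An alternative, more metric, route would be to note that the directions of the two geodesics issuing from $u_0=v_0$ toward $u_1$ and toward $v_{-1}$ are $\dot u_0$ and $-\dot v_0=-\dot u_0$, whose $L^2$ inner product $\langle\dot u_0,-\dot u_0\rangle_{u_0}=-\|\dot u_0\|^2$ realizes Alexandrov angle $\pi$; the first variation of $d_2(u_t,v_{-1})$ at $t=0$ is then the maximal rate $+\|\dot u_0\|$, which in a CAT(0) space forces $d_2(v_{-1},u_1)=d_2(v_{-1},v_0)+d_2(v_0,u_1)$ and hence colinearity.
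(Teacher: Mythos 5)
Your primary (pluripotential) route breaks down at exactly the step you flag as delicate, and not merely for technical reasons. The endpoints $u_1,v_{-1}$ are only in $\mathcal E^2_\omega$; indeed, in the one application of this lemma (the proof of Theorem \ref{thm: no_metric_symmetry_intr}) one has $u_1 \in \mathcal E^2_\omega \setminus L^\infty$, so no better case can be assumed. But then for every $t \in (0,1]$ the potential $u_t$ is itself unbounded: by $t$-convexity and \eqref{eq: limit_t_0_1}, $u_t \leq (1-t)u_0 + t u_1$ a.e., and $\omega$-psh functions bounded below a.e.\ are bounded below everywhere, so $u_t \notin L^\infty$; moreover $u_t(x) = -\infty$ for $t \in (0,1)$ on the degenerate set described after \eqref{eq: geod_env_def}. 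In particular $u_t$ is in general neither continuous nor differentiable in $x$ for any $t>0$, so the asserted matching of spatial gradients ``$\nabla_x w_t \to \nabla_x w_0$ from both sides'' is not a delicate regularity point --- the objects it refers to do not exist. The $C^{1,1}$ regularity of Chen and Chu--Tosatti--Weinkove \cite{Ch00,CTW17} requires \emph{both} endpoints in $\mathcal H_\omega$ and gives nothing here. Worse, $W$ need not even be continuous across the gluing slice $H$ (it can equal $-\infty$ arbitrarily close to $H$ on either side, while $W|_H = u_0$ is smooth), so the distributional bookkeeping of gradient jumps and $\delta_H$-currents cannot be set up; and the later steps (no Monge--Amp\`ere mass on $H$, then maximality via a domination principle for unbounded finite-energy potentials on the strip) would each require substantial extra machinery even if it could.

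Your alternative metric route is the right idea --- it is in essence the paper's actual proof --- but as sketched it assumes the crux. The claim that the initial tangents $\dot u_0$ and $-\dot u_0$, having $L^2$ inner product $-\|\dot u_0\|^2$, ``realize Alexandrov angle $\pi$'' is precisely what must be proven: in the completion $(\mathcal E^2_\omega,d_2)$ there is no a priori link between the formal $L^2$ pairing of initial tangents and the Alexandrov angle, which is defined purely through distances (your first-variation argument has the same circularity, since the first variation formula in a CAT(0) space is expressed through that angle). What supplies the link is the quantitative estimate of \cite[Theorem 3.1]{DL18}: for two $d_2$-geodesics issuing from a common point, the distance between their endpoints dominates the $L^2$ norm of the difference of the initial tangents. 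This, together with \cite[Lemma 3.4(ii)]{BDL16} (which identifies the metric speed of a finite-energy geodesic with the $L^2$ norm of its initial tangent --- also implicitly used in your ``same constant speed'' remark), is how the paper argues: it yields $d_2(v_{-1},u_1) \geq \big(\int_X |2\dot u_0|^2\,\omega^n\big)^{1/2} = d_2(v_{-1},0)+d_2(0,u_1)$, the triangle inequality gives the reverse, the same two-sided argument applied to arbitrary parameters $a \in [-1,0]$, $b \in [0,1]$ gives $d_2(w_a,w_b) = (b-a)\,d_2(0,u_1)$, and uniqueness of $d_2$-geodesics concludes. If you replace your angle assertion with these two citations, your second route becomes a complete proof; without them it begs the question.
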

\begin{proof}  By possibly changing the background metric, we can assume that $u_0=v_0=0$. From the $L^2$ version of \cite[Lemma 3.4(ii)]{BDL16}, (whose proof is identical to the $L^1$ version, presented in \cite{BDL16}) we have that
\begin{equation}\label{eq: init_length}
d_2(v_{-1},0)^2 = \int_X |\dot u_0|^2 \omega^n = \int_X |\dot v_0|^2 \omega^n = d_2(0,u_1)^2.
\end{equation}
Next we point out that 
\begin{equation}\label{eq: weak_geod_id}
d_2(v_{-1},u_1) = d_2(v_{-1},0) + d_2(0,u_1). 
\end{equation}
Indeed, from the triangle inequality we have that $d_2(v_{-1},u_1) \leq  d_2(v_{-1},0) + d_2(0,u_1)$. The reverse inequality follows from \eqref{eq: init_length} and \cite[Theorem 3.1]{DL18}:
$$d_2(v_{-1},0) + d_2(0,u_1)=\bigg(\int_X |2 \dot u_0|^2 \omega^n\bigg)^{\frac{1}{2}}  \leq d_2(v_{-1},u_1).$$
Due to uniqueness of $d_2$-geodesic segments, we only need to show that for any $a,b \in [-1,1]$ with $a<b$ we have that 
\begin{equation}\label{eq: w_geod}
d_2(w_a,w_b) = \frac{b-a}{2} d_2(v_{-1},u_1)=(b-a)d_2(0,u_1)=(b-a)d_2(v_{-1},0).
\end{equation}
Since $t \to u_t$ and $t \to v_t$ are $d_2$-geodesics, we only need to treat the case $a \in [-1,0]$ and $b \in [0,1]$. The proof of this is almost identical to that of \eqref{eq: weak_geod_id}. Indeed after another application of \cite[Theorem 3.1]{DL18} we arrive at
$$d_2(v_a,u_b) \geq \bigg( \int_X |(b-a) \dot u_0|^2 \omega^n\bigg)^{\frac{1}{2}} = (b-a) d_2(0,u_1).$$
The reverse inequality follows from the triangle inequality:
$d_2(v_a,u_b) \leq d_2(v_a,0) + d_2(0,u_b) = (b-a)d_2(0,u_1)$.
\end{proof}

\begin{proof}[Proof of Theorem \ref{thm: no_geod_ext_intr}]
By changing the background metric, we can assume without loss of generality that $\phi_0 =0$. From \eqref{eq: geod_env_def} it follows that $t \to \phi_t + Ct$ is a   $d_2$-geodesic for any $C \in \Bbb R$. As a result, we can also assume that $\phi_1 \leq 0$.

To derive a contradiction, let us further assume that there exists a $d_2$-geodesic $[-\varepsilon,1] \ni t \to \phi_t \in \mathcal E^2_\o$, as described in the statement of the theorem.

First we show that $\phi_{-\varepsilon} \geq 0$.
 This is a simple consequence of the $t$-convexity. By the results of \cite{Da17b} (see the discussion near \eqref{eq: limit_t_0_1}) there exists a set $Z \subset X$ of measure zero such that for all $x \in X \setminus Z$ we have that $t \to \phi_t(x)$ is convex,  
$\phi_0(x)=0$, $\lim_{t \nearrow 1} \phi_t(x) = \phi_1(x) \leq 0$, and $\lim_{t \searrow -\varepsilon} \phi_t(x) = \phi_{-\varepsilon}(x)$. Due to $t$-convexity, we obtain that $\phi_{-\varepsilon}(x) \geq 0$ away from $Z$. As $\phi_{-\varepsilon} \in \textup{PSH}(X,\omega)$, we obtain that $\phi_{-\varepsilon} \geq 0$. 

Since $\phi_{-\varepsilon}$ is usc, it follows that $\sup_X \phi_{-\varepsilon} <+\infty$, i.e., $\phi_{-\varepsilon} \in L^\infty$. Using \eqref{eq: geod_env_def} for the $d_2$-geodesic joining $\phi_{-\varepsilon}$ and $\phi_0$, it follows that
$$\phi_t \geq \phi_{-\varepsilon} - \frac{\varepsilon-t}{\varepsilon} \sup_X \phi_{-\varepsilon}, \ t \in [-\varepsilon,0).$$
Since $(-\varepsilon,1) \ni t \to \phi_t(x)$ is $t$-convex for all $x \in X \setminus Z$, it follows that the above estimate extends to $t \in [-\varepsilon,1]$, contradicting the fact that $\phi_1 \in \mathcal E^2_\omega \setminus L^\infty$.
\end{proof}

\begin{proof}[Proof of Theorem \ref{thm: no_metric_symmetry_intr}] We can assume without loss of generality that $\phi=0$.

To derive a contradiction, we further assume that  there exists a metric $L^2$ symmetry $F: \mathcal V \to \mathcal V$, as described in the statement of the theorem. 

Since $\mathcal V$ is $d_2$-open, it follows that $0 \in B(0,\delta) \subset \mathcal V$ for some $\delta >0$, where $B(0,\delta)$ is the $d_2$-ball of radius $\delta$ centered at $0$. As $F$ is a metric $L^2$ symmetry it follows that $F:B(0,\delta) \to B(0,\delta)$ is bijective.

Let $\psi_1 \in B(0,\delta)$ such that $\psi_1 \in \mathcal E^2_\o \setminus L^\infty$. One can find such $\psi$ as a consequence of \cite[Theorem 3]{Da15}. Let $[0,1] \ni t \to \psi_t,F(\psi_t) \in B(0,\delta)$ be the $d_2$-geodesics connecting $0$ and $\psi_1$,  respectively $0$ and $F(\psi_1)$. 

Since $F$ is a metric $L^2$ symmetry, by definition we have that $\dot \psi_0 = -\dot {F(\psi_0)}$. Consequently,  according to Lemma \ref{lem: concat_geod}, the concatenation $[-1,1] \ni t \to w_t \in B(0,\delta)$ of the curves $t \to F(\psi_{-t})$ and $t \to \psi_t$ is a $d_2$-geodesic. But then $t \to w_t$ extends $t \to \psi_t$ at $t=0$, giving a contradiction with Theorem \ref{thm: no_geod_ext_intr}.
\end{proof}

\footnotesize
\let\OLDthebibliography\thebibliography 
\renewcommand\thebibliography[1]{
  \OLDthebibliography{#1}
  \setlength{\parskip}{1pt}
  \setlength{\itemsep}{1pt}
}

\bigskip
\normalsize
\noindent{\sc University of Maryland}\\
\vspace{0.2cm}\noindent {\tt tdarvas@math.umd.edu}
\end{document}